\documentclass[final,nomarks]{dmtcs-episciences} % Mark is not an author
\usepackage[T1]{fontenc}
\usepackage[utf8]{inputenc}

\usepackage{amsfonts,amsthm,amsmath,amssymb}
\usepackage{microtype,hyperref}
\usepackage[capitalize,nameinlink]{cleveref}

\usepackage[round]{natbib}

\newtheorem{theorem}{Theorem}

\newtheorem{lemma}{Lemma}
\newtheorem{corollary}[lemma]{Corollary}
\newtheorem{observation}[lemma]{Observation}

\theoremstyle{definition}
\newtheorem{definition}[lemma]{Definition}

\title[Widths of Strict Outerconfluent Graphs]{The Widths of Strict Outerconfluent Graphs}

\author[David Eppstein]{David Eppstein\affiliationmark{1}\thanks{This research was supported in part by NSF grant CCF-2212129.}}
\affiliation{Department of Computer Science, University of California, Irvine, USA}

\keywords{confluent graph drawing, clique-width, twin-width}

\publicationdata{vol. 26:3}{2024}{20}{10.46298/dmtcs.12862}{2024-01-12; None}{2024-11-13}

\begin{document}
\maketitle  

\begin{abstract}
Strict outerconfluent drawing is a style of graph drawing in which vertices are drawn on the boundary of a disk, adjacencies are indicated by the existence of smooth curves through a system of tracks within the disk, and no two adjacent vertices are connected by more than one of these smooth tracks. We investigate graph width parameters on the graphs that have drawings in this style. We prove that the clique-width of these graphs is unbounded, but their twin-width is bounded.
\end{abstract}

\section{Introduction}

\emph{Confluent drawing} is a powerful style of graph drawing that permits many non-planar and dense graphs to be drawn without crossings \citep{DicEppGoo-JGAA-05,EppGooMen-GD-05,HirMeiRap-GD-06,HuiPelSch-Algo-07,EppGooMen-Algo-07,QueAnc-GD-10,CorDia-GD-22}. A confluent drawing consists of a system of non-crossing smooth curves in the plane, called \emph{tracks}, whose endpoints are either vertices of the graph or \emph{junctions} where several tracks meet, all having the same slope at that point. Two vertices are adjacent whenever  the union of some of the tracks forms a smooth curve connecting them. In this way, each confluent drawing represents unambiguously a unique graph, unlike the \emph{bundled drawings} \citep{LhuHurTel-CGF-17} which they otherwise resemble. Applications of confluent drawing include the automated layout of syntax diagrams \citep{BanBroEpp-GD-15}, and the simplification of the Hasse diagrams of partially ordered sets \citep{EppSim-JGAA-13}. A constrained version of confluent drawing, called \emph{strict confluent drawing}, requires that each adjacency be represented by only one smooth curve \citep{EppHolLof-JoCG-16,ForGanKlu-JGAA-21}. In \emph{outerconfluent drawings}, the tracks are interior to a disk whose boundary contains the vertices. In this work we study \emph{strict outerconfluent graphs}, the graphs that have strict outerconfluent drawings.\footnote{For the full definition of strict outerconfluent graphs, see \cref{def:outerconf}.} If the vertex ordering along the drawing boundary is given, these graphs may be recognized in polynomial time \citep{EppHolLof-JoCG-16}, but their recognition without this information, and other algorithmic problems concerning them, remain mysterious.

In this work, following \citet{ForGanKlu-JGAA-21}, we study the width of strict outerconfluent graphs. There are many graph width parameters, of which treewidth is perhaps the most famous. Treewidth is bounded for some types of graph drawing with vertices on the boundary of a disk (outerplanar and outer-$k$-planar drawings \citep{WooTel-NYJM-07}), suggesting that, analogously, strict outerconfluent graphs might have bounded width of some sort. However, graphs of bounded treewidth are sparse, and strict outerconfluent graphs can be dense: for instance they include the complete graphs and complete bipartite graphs. Therefore, a different concept of width is needed, one that can be bounded for dense graphs. Among these widths, we focus on two, \emph{clique-width} and \emph{twin-width}.\footnote{For definitions of these two width parameters, see \cref{def:clique-width} and \cref{def:twin-width}.}
 
For sparse graphs, clique-width is equivalent to treewidth, in the sense that if one of these two width parameters is bounded, the other one is also bounded \citep{GurWan-WG-00}, but graphs of bounded clique-width can also be dense.
The strict outerconfluent graphs include the distance-hereditary graphs, which are known to have bounded clique-width \citep{EppGooMen-GD-05}. \citet{ForGanKlu-JGAA-21} defined a sub-class of strict outerconfluent drawings, the \emph{tree-like} outerconfluent drawings, in which the tracks that are incident to junctions must form a single topological tree within the drawing, and proved that their graphs also have bounded clique-width \citep{ForGanKlu-JGAA-21}. We prove that, in contrast, there exist strict outerconfluent graphs with unbounded clique-width.

To complement this result, we prove that another width parameter of these graphs, their \emph{twin-width}, is bounded. Twin-width is bounded for many classes of graphs of interest in graph drawing, including the planar and $k$-planar graphs, and the graphs of bounded genus. It is also bounded for graphs of bounded clique-width \citep{TW2}. The algorithmic consequences of bounded twin-width include the existence of a fixed-parameter tractable algorithm for testing whether a given graph models a given formula of first-order logic, parameterized by the size of the formula (including as a special case subgraph isomorphism) \citep{TW1}, and better approximation algorithms for dominating set, independent set, and graph coloring than the best approximations known for more general families \citep{TW3,BerBonDep-STACS-23}.

We prove the following results:
\begin{itemize}
\item The strict outerconfluent graphs do not have bounded clique-width (\cref{thm:clique-width}).
\item The strict outerconfluent graphs have bounded twin-width. A twin-width decomposition of bounded width can be constructed for these graphs in polynomial time, given their vertex ordering around the boundary of a strict outerconfluent drawing.
\end{itemize}

The main idea of the first result is to find a recursive construction of a family of strict outerconfluent drawings for which we can prove unbounded \emph{rank-width}, a graph width parameter closely related to clique-width. The main idea of the second result is to harness known results relating the growth rate of a family of \emph{ordered graphs} (pairs of a graph and a linear ordering on its vertices) to the twin-width of the family, and to use the fact that strict confluent drawings have only linearly many junctions \citep{EppHolLof-JoCG-16} to show that they have a small growth rate.

\section{Definitions}

For completeness we repeat the following definitions, from previous work, of the main concepts considered in our results. We assume familiarity with the basic concepts of graph theory and of two-dimensional topology. By a \emph{graph} we always mean a finite undirected graph, without multiple adjacencies or loops.

\begin{definition}
\label{def:outerconf}
A \emph{strict outerconfluent drawing} of a graph $G$ consists of a system of finitely many smooth curves in a topological disk, which we call \emph{tracks},\footnote{In some past work on confluent drawings these curves have been called \emph{arcs}, but that terminology conflicts with standard graph-theoretic terminology for directed edges.} disjoint except for shared endpoints. These endpoints have two types: some are identified one-for-one with vertices of $G$, while others are called \emph{junctions}. Each vertex must lie on the boundary of the disk. At a junction, three or more tracks must meet, all having the same slope. A smooth curve within the union of tracks, starting and ending at vertices and otherwise passing only through tracks and junctions, is called an \emph{edge curve}. Each two adjacent vertices of $G$ must be the endpoints of a unique edge curve. No edge curve may connect a vertex to itself, or connect non-adjacent vertices. Each track must be part of at least one edge curve. A \emph{strict outerconfluent graph} is a graph that has a strict outerconfluent drawing.
\end{definition}

\begin{definition}
\label{def:clique-width}
The \emph{clique-width} of an undirected graph is the minimum number of colors needed to construct the graph by a sequence of the following four operations on (improperly) colored graphs:
\begin{itemize}
\item Create a single-vertex graph, with its vertex given any of the available colors.
\item Take the disjoint union of two colored graphs.
\item Recolor all vertices of one color to another color (possibly one that is already used by other vertices).
\item Perform a \emph{color join} operation that adds edges between all pairs of vertices of two specified colors. 
\end{itemize}
\end{definition}

\begin{definition}
\label{def:twin-width}
\emph{Twin-width} is defined through a type of graph decomposition in which clusters of vertices are merged in pairs, starting with one cluster per vertex, until only one cluster is left. At each step of the decomposition, two clusters are connected by a \emph{red edge} if some but not all adjacencies exist between vertices of one cluster and vertices of the other. The goal is to find a decomposition sequence that minimizes the maximum degree of the resulting sequence of red graphs. The twin-width of a graph $G$ is the minimum value of $d$ such that there exists a decomposition of $G$ for which, after each pairwise merge, the red graph has maximum degree at most $d$ \citep{TW1}.
\end{definition}

\section{Unbounded clique-width}

\begin{figure}[t]
\includegraphics[width=\textwidth]{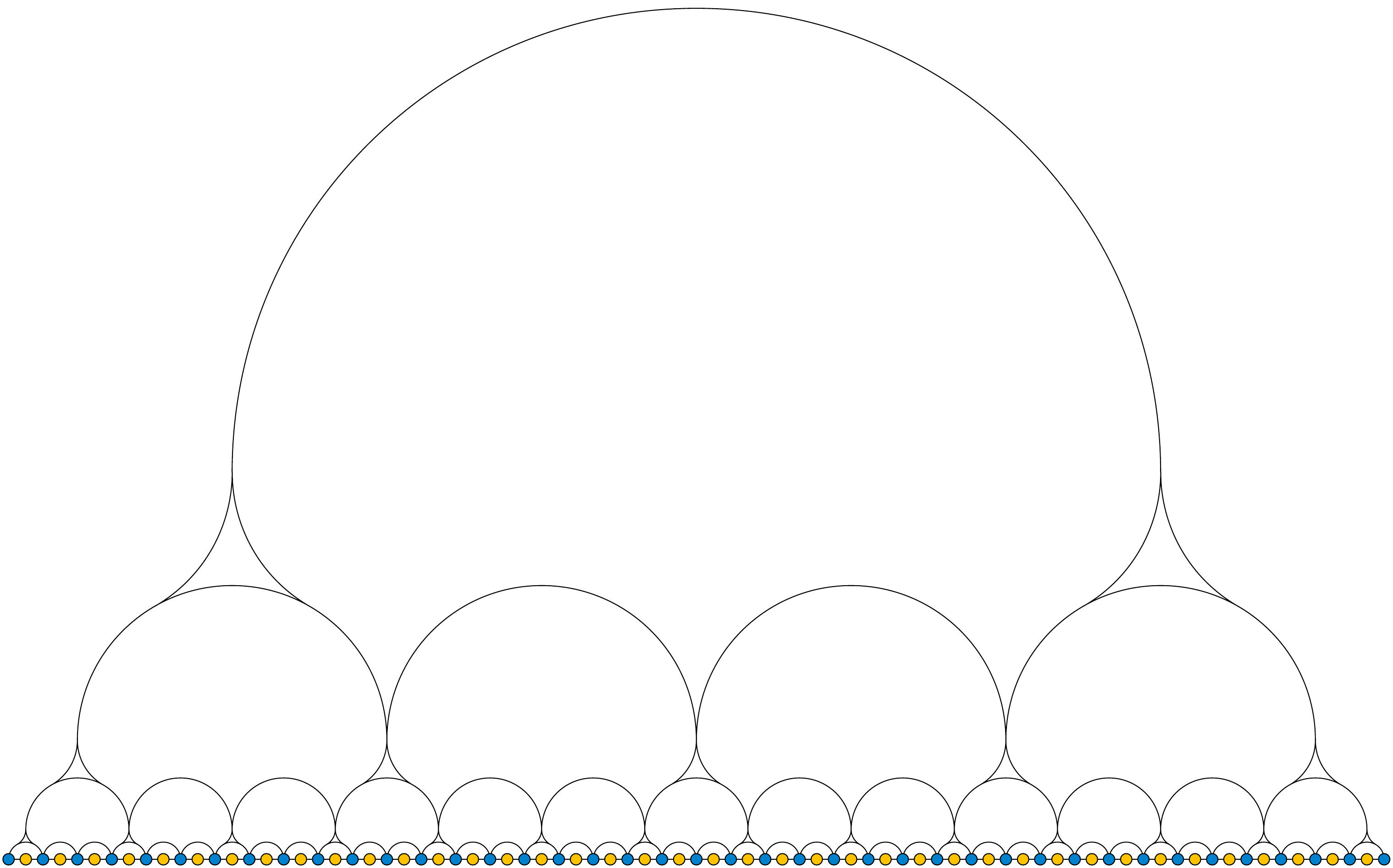}
\caption{Recursively constructed non-tree-like strict outerconfluent graph $G_4$}
\label{fig:non-tree-like}
\end{figure}

In this section we prove that strict outerconfluent graphs can have unbounded clique-width. We were unable to prove this using standard examples of graphs with high clique-width, such as grid graphs; indeed, even a $3\times 3$ grid is not strict outerconfluent. (The outer induced 8-cycle has only one strict outerconfluent drawing, because it has no 4-cycles to provide non-trivial confluence, and it is not possible to add the central vertex of the grid to this drawing.) Instead, our proof is based on a family of drawings depicted in \cref{fig:non-tree-like}, which we construct as follows:

\begin{definition}
Let $G_k$ be the graph represented by a confluent drawing constructed as follows.
\begin{itemize}
\item It is convenient to shape the disk on which the graph is drawn as a half-plane above a horizontal bounding line, to match the depiction in the figure. (This is merely a convention for describing the drawing and does not affect its combinatorial structure.)
\item On the boundary line of the half-plane, place $3^k$ vertices (the alternating blue and yellow vertices of the figure), connected by tracks that directly connect consecutive pairs of vertices (drawn along the boundary line). These are the only tracks incident to the $\lfloor 3^k/2\rfloor$ yellow vertices. Additional tracks will extend vertically from the $\lceil 3^k/2\rceil$ blue vertices.
\item The remaining tracks of the drawing are arranged into $k$ levels, each of which is drawn within a slab of the half-plane bounded between two horizontal lines. Number these levels from $0$ to $k-1$, bottom to top. The bottom line of the $i$th level contains $\lceil 3^{k-i}/2 \rceil$ points (vertices on level $0$, junctions at higher levels) at which tracks extend with a vertical tangent into that level; number these points as $p_{i,j}$ with $0\le j<3^{k-i}/2$.
\item Within level $i$, connect each two consecutive points $p_{i,j}$ and $p_{i,j+1}$ by a semicircle. If $j$ is a multiple of three, subdivide this semicircle by two junctions into three circular arc tracks; for other values of $j$, this semicircle is itself a track. As a special case, for the top level (level $k-1$) the single semicircle connecting points $p_{i,0}$ and $p_{i,1}$ is not subdivided, and forms a track. In the figure, the arcs into which the semicircles are subdivided span angles of~$\pi/3$.
\item For each level $i$ except the top level, and each subdivided semicircle connecting points $p_{i,j}$ and $p_{i,j+1}$ where $j$ is a multiple of three, add tracks connecting the two subdivision points to the point $p_{i+1,j/3}$ on the upper boundary line of the level. At the two junctions on the semicircle, these tracks should be oriented so that each one connects $p_{i+1,j/3}$ downward by a smooth curve through the semicircle to the two points $p_{i,j}$ and $p_{i,j+1}$. In the figure, these upward tracks are also arcs of circles, congruent to the arcs of the subdivided semicircle.
\end{itemize}
\end{definition}

For instance, the figure depicts $G_4$. By construction, $G_k$ has exactly $n=3^k$ vertices.

\begin{observation}
\label{obs:track-neighbors}
Any semicircular track at level $i$ of $G_k$ has smooth paths connecting it to $2^i$ vertices, $2^{i-1}$ on its left and $2^{i-1}$ on its right. The track is used by edges of $G_k$ that connect each of the vertices in the left subset to each of the vertices in the right subset. 
These two subsets are separated by a gap of $3^{i-1}$ vertices, wide enough that it cannot be spanned by any semicircular track at a lower level of $G_k$. 
\end{observation}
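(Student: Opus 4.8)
The plan is to prove all three assertions of \cref{obs:track-neighbors} — the reachability count, the complete-bipartite description, and the positional separation — simultaneously, by induction on the levels of $G_k$, after first pinning down how smooth curves pass through the two kinds of junctions in the construction: the \emph{subdivision} junctions placed on the subdivided semicircles, and the \emph{apex} junctions $p_{\ell,m}$ (with $\ell\ge 1$) at which the up-tracks coming from level $\ell-1$ meet.

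First I would analyze the junctions. At a subdivision junction $q$ on the semicircle joining $p_{\ell,3m}$ and $p_{\ell,3m+1}$, the orientation required by the construction — that the up-track at $q$, together with the semicircle, forms a smooth curve descending from $p_{\ell+1,m}$ to whichever of $p_{\ell,3m}, p_{\ell,3m+1}$ is nearer to $q$ — puts that up-track on the same side of the common tangent line as the central arc of the semicircle, hence on the side opposite the arc toward that nearer endpoint. So a smooth curve that enters $q$ along the up-track, or along the central arc, is forced to leave along the arc toward the nearer endpoint; it has no alternative. At an apex junction $p_{\ell,m}$, by contrast, the two up-tracks arriving from below both leave $p_{\ell,m}$ on the downward side of its (vertical) common tangent, whereas the one or two level-$\ell$ arcs incident to $p_{\ell,m}$ leave on the upward side; thus a smooth curve reaching $p_{\ell,m}$ along a level-$\ell$ arc may continue down \emph{either} of the two up-tracks below. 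Apex junctions therefore branch two ways, while subdivision junctions entered from above do not branch.

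Given this, let $D(\ell,m)$ be the set of vertices reachable by a smooth curve that reaches $p_{\ell,m}$ along a level-$\ell$ track and then descends, with $D(0,m)=\{p_{0,m}\}$ as base case. By the junction analysis, for $\ell\ge 1$ any such curve leaves $p_{\ell,m}$ by one of its two down-tracks, arrives at a subdivision junction of the level-$(\ell-1)$ semicircle between $p_{\ell-1,3m}$ and $p_{\ell-1,3m+1}$, and is then forced on to $p_{\ell-1,3m}$ or $p_{\ell-1,3m+1}$, from which it continues recursively; hence $D(\ell,m)=D(\ell-1,3m)\cup D(\ell-1,3m+1)$. Unwinding the recursion, $D(\ell,m)$ consists exactly of the vertices with index $3^\ell m+\sum_{t=1}^{\ell}3^{\ell-t}b_t$ over all $b_1,\dots,b_\ell\in\{0,1\}$: these indices are pairwise distinct (they are exactly the base-$3$ numbers above $3^\ell m$ using only the digits $0$ and $1$), they all lie in the window $[\,3^\ell m,\ 3^\ell m+(3^\ell-1)/2\,]$, the union defining $D(\ell,m)$ is disjoint, the set doubles with each level, and the windows of $D(\ell,\cdot)$ for distinct arguments are pairwise disjoint. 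Now a semicircular (undivided) track $S$ joins two consecutive points $p_{i,j}$ and $p_{i,j+1}$ of its level, and every smooth curve using $S$ traverses it from one of these points to the other and then descends on each side; the curves through $S$ are therefore exactly those joining a vertex of $D(i,j)$ to a vertex of $D(i,j+1)$, with precisely one curve for each such ordered pair (the descent to a prescribed target being forced). This gives the reachability count, the left/right split of the claimed sizes, and the complete-bipartite description. For the separation, the window estimate shows that the rightmost vertex reached on the left and the leftmost reached on the right are separated along the boundary by the claimed block of vertices, whereas the \emph{entire} set of vertices reached by any semicircular track of a strictly lower level is, by the same estimate applied to that level's windows, confined to a strictly shorter stretch of the boundary — so no lower-level semicircular track can reach vertices on both sides of the gap. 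The leftmost and rightmost points of each level, and the distinguished undivided semicircle at the top level, are handled by the identical argument, since the downward recursion uses only the pair of down-tracks, which is present at every $p_{\ell,m}$ with $\ell\ge 1$.

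I expect the junction analysis to be the delicate part: checking from the construction that subdivision junctions entered from above force the descending curve while apex junctions branch exactly two ways. These two facts together are what make $|D(\ell,m)|$ a clean power of two and, at the same time, guarantee that each reachable pair of endpoints is joined by a \emph{single} curve through $S$ — which is exactly what ``the track is used by edges of $G_k$ that connect each left vertex to each right vertex'' requires. With the junction behaviour established, the remainder is the index bookkeeping outlined above.
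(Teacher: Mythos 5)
The paper states this as an unproved \emph{observation}, so there is no argument of the paper's to compare against; you are supplying a proof the paper omits. Your proof is substantively correct and, I think, the right way to do it: the key facts are exactly the two junction behaviours you isolate (subdivision junctions entered along the up-track or the central arc are forced out along the end arc, while apex junctions reached along a level-$\ell$ arc branch to either of the two descending tracks), from which the doubling recursion $D(\ell,m)=D(\ell-1,3m)\cup D(\ell-1,3m+1)$ with $D(0,m)=\{p_{0,m}\}$ and the window estimates all follow cleanly, as does uniqueness of the curve to a prescribed target (each apex choice is determined by which window the target lies in, and the subdivision step is forced).

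However, there is a discrepancy you should have flagged rather than glossed over. With the construction's level indices (0 through $k-1$, which are the ones your $D(\ell,m)$ uses), a semicircular track at level $i$ joins $D(i,j)$ to $D(i,j+1)$, so it reaches $2^i$ vertices on \emph{each} side, $2^{i+1}$ in total, and the two sides are separated by a gap of $3^i$ boundary vertices. You then assert ``This gives the reachability count, the left/right split of the claimed sizes,'' but the observation as worded claims $2^{i-1}$ on each side, $2^i$ total, and a gap of $3^{i-1}$. These differ by exactly one level: the observation (and the later sentence ``Each track on level $i$ lies above a sub-drawing of a graph isomorphic to $G_i$, spanning a subsequence of $3^i$ vertices, of which it can reach $2^i$'' in the proof of \cref{thm:clique-width}) is using levels numbered $1$ through $k$, while the construction numbers them $0$ through $k-1$. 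One can confirm your count (and hence the shift) against the paper's own edge formula $\frac{8\cdot 4^k-3\cdot 3^k-5}{6}$: summing $(\lceil 3^{k-i}/2\rceil-1)\cdot 4^i$ over $0\le i\le k-1$ plus the $3^k-1$ boundary edges reproduces it exactly, which requires each construction-level-$i$ semicircle to contribute $2^i\times 2^i$ edges. So the mathematics in your derivation is right and in fact exposes an internal indexing inconsistency in the paper; the fix is simply to state explicitly that the observation's ``level $i$'' corresponds to your $D(i-1,\cdot)$ (equivalently, renumber the observation's levels to start at $0$ with the exponents $2^{i+1}$, $2^i$, $3^i$). Two smaller nits: ``one curve for each such ordered pair'' should be \emph{unordered} pair (the curve through $S$ from $a$ to $b$ is the same as from $b$ to $a$); and ``confined to a strictly shorter stretch'' is slightly loose --- the lower-level span is $3^{i}$ vertices inclusive while the gap is also $3^i$ vertices, so the right comparison is that spanning the gap would require reaching at least $3^i+2$ consecutive positions, which exceeds the $3^i$ available.
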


It follows that $G_k$ has $\Theta(4^k)$ edges, enough to make it not sparse. More precisely, the number of edges can be calculated as
\[
\frac{8\cdot 4^k - 3\cdot 3^k - 5}{6}.
\]
We omit the details as this calculation is not important for our results.

\begin{lemma}
$G_k$ is strict outerconfluent.
\end{lemma}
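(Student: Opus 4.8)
The task is to verify that the confluent drawing defined above actually witnesses that $G_k$ is a *strict* outerconfluent graph — i.e., that it fits the requirements of \cref{def:outerconf}. The construction already places all vertices on the boundary line of the half-plane (which serves as the boundary of the disk), so the plan is to check the remaining conditions: (i) the tracks are smooth curves meeting only at shared endpoints, (ii) at every junction at least three tracks meet with a common slope, (iii) every pair of adjacent vertices is joined by a *unique* edge curve, (iv) no edge curve joins a vertex to itself or to a non-adjacent vertex, and (v) every track lies on at least one edge curve. Conditions (i), (ii), and (v) are essentially immediate from the construction — the tracks are boundary segments, circular arcs of semicircles, and the upward-connecting arcs; junctions are the subdivision points of semicircles together with the points $p_{i+1,j/3}$, at each of which the prescribed three arcs meet tangentially by the "congruent arcs" stipulation; and every track is used because each semicircular arc and each upward arc is traversed by some smooth path reaching down to level-$0$ vertices, as spelled out in \cref{obs:track-neighbors}. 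The real content is in (iii) and (iv): characterizing exactly which smooth curves through the track system are edge curves, and confirming that this matches a simple graph with no repeated adjacencies.

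The key step is a structural description of the edge curves. I would argue that any edge curve must start at a level-$0$ vertex, travel along boundary tracks and then up through a nested sequence of levels via the subdivided-semicircle-and-upward-arc gadgets, reach a unique "highest" semicircular track it traverses, and then descend symmetrically to another level-$0$ vertex. The crucial local fact is that at each junction the tangency condition forces a smooth curve to continue in a *unique* way: an ascending track meets a semicircle at a junction so that the only smooth continuations pair the upward arc with each of the two halves of the semicircle, and pair the two halves of the semicircle with each other only through... — actually here one must check carefully that a curve entering along the upward arc exits along one semicircle-half and vice versa, while a curve arriving along one semicircle-half at a junction continues either along the other semicircle-half *or* down the upward arc, but not both simultaneously (a smooth curve is a single path). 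This gives, for each semicircular track at level $i$, exactly the set of smooth paths reaching its left endpoint-side (the $2^{i-1}$ vertices below-left) and exactly the set reaching its right side, and an edge curve through that track is the concatenation of one left path, the track, and one right path — so the adjacencies realized by level-$i$ tracks are exactly the complete bipartite pairs described in \cref{obs:track-neighbors}.

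From that description, uniqueness (iii) follows because the gaps between the left and right vertex-sets of a level-$i$ track have width $3^{i-1}$, strictly larger than the span $2^{i'-1}+3^{i'-1}+2^{i'-1} < 3^{i'}$ reachable across any lower-level track $i' < i$ — wait, more carefully: two vertices $u,v$ are adjacent via a level-$i$ track iff $u$ is in the left block and $v$ in the right block of exactly one level-$i$ semicircle, and the nesting/gap structure guarantees that no two distinct semicircles (at the same or different levels) have the same ordered pair of blocks containing $(u,v)$; so each adjacency arises from exactly one track, hence exactly one edge curve. Condition (iv) — no edge curve between a vertex and itself — follows because an edge curve's endpoints lie in the left block and the right block of its topmost track, which are disjoint; and no edge curve joins non-adjacent vertices essentially by definition of which pairs we declared adjacent (the graph *is* the set of pairs realized by edge curves, so this is automatic once we confirm the edge-curve set is exactly what \cref{obs:track-neighbors} says).

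**Main obstacle.** The delicate part is the local analysis at junctions — establishing that the tangency geometry yields *precisely* the claimed family of smooth paths and no others, in particular that a smooth curve cannot "branch" or make an unintended U-turn through a junction where three congruent circular arcs meet. This requires checking the slopes of all three arcs at each junction and confirming which pairs form a $C^1$ (indeed $C^\infty$, since they are circular arcs) join; once that local picture is pinned down, the global characterization of edge curves and the gap-based uniqueness argument are straightforward bookkeeping, so I would not belabor them.
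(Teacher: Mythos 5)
Your proposal follows essentially the same route as the paper's proof: characterize every edge curve as ascending through the levels, traversing a single semicircular track at its apex, and descending (using the fact that the junction tangencies never let a curve turn back upward), and then derive strictness from the observation that distinct tracks realize disjoint sets of vertex pairs. The one place where the paper is noticeably crisper and your plan is vaguest is uniqueness among tracks: the paper explicitly separates this into three clean cases (different levels, handled by disjoint ranges of vertex-index distance; same level without a shared junction, handled by disjoint reachable vertex sets; same level with a shared junction, handled by noting one track reaches only leftward and the other only rightward of the shared junction), whereas you compress all of this into ``the nesting/gap structure guarantees that no two distinct semicircles have the same ordered pair of blocks,'' which is correct in spirit but would need the same case split to be a complete argument; conversely, your explicit flagging of the local tangency analysis at junctions is exactly the step the paper asserts in one clause without elaboration.
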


\begin{proof}
Each smooth curve from vertex to vertex must go upward through the levels of the track, follow a single semicircular track at some level, and then go back downwards through the levels, because there are no tracks that smoothly connect downward-going curves to upward-going curves. A smooth curve from vertex to vertex that uses a semicircular track at level $i$ must connect two vertices that are at least $3^{i-1}+1$ steps apart and at most $3^i-1$ steps apart. Because these numbers of steps form disjoint ranges for disjoint levels, no two curves using semicircles from different levels can connect the same two vertices. Two semicircular tracks at the same level that do not share a confluent junction have disjoint subsets of vertices that they can reach. Two semicircular tracks at the same level that do share a confluent junction cannot provide two paths between any pair of vertices, because one of the tracks connects vertices that can reach the shared junction to other vertices to the left of the junction, while the other track connects only to the right.
\end{proof}

Rather than working directly with clique-width, it is convenient to use \emph{rank-width}, a closely related quantity derived from hierarchical clusterings of the vertices of a given graph.

\begin{definition}
\label{def:rank-width}
Define a \emph{hierarchical clustering} of a graph to be a ternary tree having the graph's vertices as its leaves. For each edge $e$ of such a tree, removing $e$ from the tree partitions it into two subtrees, and thus defines a partition of the vertices into two subsets; call this partition the \emph{cut} associated with $e$, and call the two subsets the \emph{sides} of the cut. For any of these cuts, we can form a binary \emph{biadjacency matrix} whose rows correspond to the vertices on one side of the cut, and whose columns correspond to the vertices on the other side (choosing arbitrarily which side to use for which role). The coefficient of this matrix in a given row and column is one if the corresponding two vertices are adjacent, and zero otherwise. (For the purposes of defining rank-width, these coefficients are defined within the finite field $\mathbb{Z}_2$, rather than as real numbers, but that makes little difference for our purposes.) The rank-width of the graph is the maximum rank of any of the biadjacency matrices of these cuts, for a hierarchical clustering chosen to minimize this maximum rank.
\end{definition}

\begin{lemma}[\citet{OumSey-JCTB-06}]
Let $G$ be any graph, let $r$ be its rank-width and let $c$ be its clique-width. Then
\[ r\le c\le 2^{r+1}-1. \]
\end{lemma}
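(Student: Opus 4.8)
The plan is to prove the two inequalities by separate, essentially opposite, constructions, with the right-hand bound being the more delicate one. For $r\le c$, start from an optimal clique-width construction of $G$, using $c$ colors, and regard it as a parse tree whose leaves are the single-vertex creation steps (and hence the vertices of $G$) and whose internal nodes are the disjoint-union, recolor, and color-join operations. Suppressing the unary (recolor and color-join) nodes leaves a tree $T$ in which every internal node has degree three --- a hierarchical clustering of $V(G)$ --- apart from minor boundary adjustments at the root and for trivially small graphs. Each edge of $T$ descends from a node $t$, and the side of the corresponding cut below $t$ is the set $V_t$ of vertices created in the subtree at $t$; the subexpression evaluated at $t$ produces a colored graph on $V_t$ with at most $c$ colors. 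The key observation is that every edge of $G$ between $V_t$ and $V(G)\setminus V_t$ is created by a color-join at an ancestor of $t$, and that every recolor and color-join performed at an ancestor of $t$ acts on all of $V_t$ purely according to current colors; hence two vertices of $V_t$ sharing a color in the subexpression at $t$ keep equal colors forever and so have identical adjacencies to $V(G)\setminus V_t$. The biadjacency matrix of the cut therefore has at most $c$ distinct rows and rank at most $c$ (the argument is insensitive to the choice of field, so it applies with the $\mathbb{Z}_2$ coefficients of \cref{def:rank-width}), giving $r\le c$.

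For $c\le 2^{r+1}-1$, take a hierarchical clustering $T$ of width $r$ and process it from the leaves upward, building clique-width constructions over one fixed color set $C$ of size $2^{r+1}-1$, which I would write as $\{0\}\cup A\cup B$ with $|A|=|B|=2^r-1$. For a node $t$ let $V_t$ be the leaves below it and $M_t$ the biadjacency matrix of the cut separating $V_t$ from the rest; since $M_t$ has rank at most $r$ over $\mathbb{Z}_2$, its rows take at most $2^r$ distinct values, which I call the \emph{types} of the vertices of $V_t$, and at most one of these is the zero row (the ``empty'' type, vertices with no neighbor outside $V_t$). The inductive claim is that $G[V_t]$ has a construction using only colors of $C$ whose final coloring gives the empty type the color $0$ and every other type a distinct color of $A$ (possible because the number of non-empty types is at most $2^r-1$). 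At a node $t$ with children $t_1,t_2$: build $G[V_{t_1}]$ and $G[V_{t_2}]$ by induction, then append recolorings to the construction of $G[V_{t_2}]$ moving its non-empty types from $A$ into $B$ via a fixed bijection; take the disjoint union (whose palette is contained in $\{0\}\cup A\cup B=C$); and, for each non-empty type $\alpha$ of $V_{t_1}$ and non-empty type $\beta$ of $V_{t_2}$ whose vertices must be adjacent, apply the color join on $\alpha$ and $\beta$. As in the first direction, whether a type-$\alpha$ vertex of $V_{t_1}$ is adjacent to a type-$\beta$ vertex of $V_{t_2}$ depends only on the pair $(\alpha,\beta)$, so these joins are well defined and produce exactly the missing edges of $G[V_t]$; the color $0$ is never joined, which is correct since empty-type vertices on either side have no neighbors on the other. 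Finally recolor every vertex to its $M_t$-type (well defined, since vertices sharing a current color share an $M_t$-type), assigning the empty type to $0$ and the others injectively into $A$, which restores the invariant. Applying this at the root, where there is a single type, yields a clique-width construction of $G$ over a color set of size $2^{r+1}-1$.

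The main obstacle is exactly the color bookkeeping in the second direction: the naive combination of two children would use two disjoint palettes of size $2^r$, i.e.\ $2^{r+1}$ colors, and shaving off the last color to reach $2^{r+1}-1$ rests on two small but necessary facts --- that the empty type can be the same color on both sides of a merge without creating any wrong edge, and that the number of \emph{non-empty} types of any cut of width $r$ is at most $2^r-1$ because the nonzero vectors of an $r$-dimensional $\mathbb{Z}_2$-space number only $2^r-1$. One must also keep the whole construction inside a single fixed alphabet $C$ (rather than letting the two recursive subexpressions use incomparable palettes), which is what forces the somewhat rigid form of the inductive invariant above. The remaining points --- smoothing the parse tree into a genuine ternary clustering and checking the base and root cases --- are routine.
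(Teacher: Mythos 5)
The paper does not prove this lemma; it is quoted verbatim from Oum and Seymour~\cite{OumSey-JCTB-06}, and no argument is given in the text. So there is nothing internal to compare against, but your proposal is worth assessing on its own terms: it is, in outline, exactly the Oum--Seymour argument. For $r\le c$ you smooth the clique-width parse tree into a (sub)cubic layout tree and observe that two vertices below a node that share a color in the subexpression at that node have identical neighborhoods across the cut for the rest of the construction, so the biadjacency matrix has at most $c$ distinct rows; that is correct, including the remark that it is field-independent. For $c\le 2^{r+1}-1$, partitioning the vertices on each side of a cut into at most $2^r$ ``types'' (rows of the biadjacency matrix, with the zero row as a shared ``empty'' type), keeping two disjoint palettes $A,B$ of size $2^r-1$ plus a shared color $0$, and merging bottom-up with color-joins indexed by type pairs, is precisely the standard construction and gives the stated bound.

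The one step you should spell out is the final ``recolor every vertex to its $M_t$-type.'' Clique-width only gives you single recolorings of one color to another, and a general map on the color set is not always realizable that way (a bijective swap on a fully occupied palette cannot be implemented without a spare color). Here it does work, but only because you get to choose the injection $c$ of $M_t$-types into $\{0\}\cup A$: pick $c$ to be a section of the induced type map $\tau_1$ from $M_{t_1}$-types to $M_t$-types, so that every used $A$-color either stays put or moves to a color that is a fixed point. Then $\phi$ restricted to $\{0\}\cup A$ is idempotent and hence implementable as a sequence of single recolorings, after which the $B$-colors can be folded in without conflict since they are never recoloring targets. With that point made explicit, the argument is complete and matches the cited source.
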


Thus, the rank-width of a family of graphs is bounded if and only if the clique-width is bounded.

\begin{definition}
A \emph{balanced cut} of an $n$-vertex graph is a partition of its vertices into two subsets that each have at least $n/3$ vertices.
\end{definition}

\begin{lemma}
\label{lem:balanced-cut}
Any graph of rank-width $r$ has a balanced cut whose biadjacency matrix has rank $\le r$.
\end{lemma}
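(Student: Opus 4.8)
The plan is to use the hierarchical clustering that witnesses the rank-width, together with a standard ``centroid edge'' argument on ternary trees, to locate a single cut of that clustering that happens to be balanced.

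First I would fix a hierarchical clustering $T$ (a ternary tree with the vertices of $G$ as leaves, in the sense of \cref{def:rank-width}) that achieves the rank-width, so that every cut associated with an edge of $T$ has a biadjacency matrix of rank at most $r$. Since the balanced cut we seek is allowed to be an arbitrary partition of the vertices, it suffices to exhibit an edge $e$ of $T$ whose cut is balanced; its biadjacency matrix then automatically has rank $\le r$. So the whole statement reduces to the purely combinatorial claim that a ternary tree with $n\ge 2$ leaves has an edge whose removal leaves at least $n/3$ leaves on each side (the trivial cases $n\le 2$, where any edge works, can be disposed of separately).

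For the combinatorial claim I would argue by contradiction: suppose every edge $e$ of $T$, when removed, produces one side with fewer than $n/3$ leaves and hence, complementarily, one ``heavy'' side with more than $2n/3$ leaves; orient $e$ toward its heavy side. Because $T$ is a finite tree and therefore acyclic, starting at any node and repeatedly following outgoing edges must terminate at a node $x$ all of whose incident edges point inward. Removing $x$ from $T$ leaves some number $d\le 3$ of subtrees, one per edge incident to $x$, and each of these subtrees, being the light side of an edge directed toward $x$, contains fewer than $n/3$ leaves. If $x$ is an internal node, then these subtrees contain all $n$ leaves between them, so $n < d\cdot(n/3)\le n$, a contradiction; and the case where $x$ is a leaf is ruled out directly, since then the single subtree hanging off $x$ would contain all $n-1$ remaining leaves, forcing $n-1<n/3$ and so $n<3/2$. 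Hence some edge of $T$ is balanced.

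The bulk of the work is the centroid argument of the previous paragraph, which is elementary; the only point that needs a little care is handling the ``sink'' node $x$ in all cases (internal node of degree at most $3$ versus leaf) and confirming that the key inequality goes through with no rounding slack, because leaf counts are integers and the subtrees hanging off an internal node account for exactly all $n$ leaves. No properties of strict outerconfluent drawings enter here: the lemma is a generic fact about rank-decompositions that will later be applied to the specific graphs $G_k$.
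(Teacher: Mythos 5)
Your proof is correct and follows the same overall strategy as the paper: fix a ternary tree $T$ realizing the rank-width, note that each edge of $T$ already gives a cut of rank at most $r$, and reduce to showing that some edge of $T$ is balanced. You prove this last, purely combinatorial, fact differently from the paper. The paper walks edge-to-edge toward the large side of the current edge, choosing a neighbor edge that separates $>n/3$ vertices, and establishes termination via a monovariant (the sizes of the small sides of visited edges strictly increase). You instead orient every edge toward its heavy side, observe that a finite acyclic orientation must have a sink vertex $x$, and note that the at most three subtrees hanging off $x$ are then all light, which cannot account for all $n$ leaves. Both are centroid arguments for trees; yours is the more classical phrasing (by contradiction, via a sink), while the paper's is constructive, but both rely on the same ternary-degree bound to make the count work. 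One small caveat worth noting: for $n\le 1$ the tree $T$ has no edges and the graph has no balanced cut at all, so the blanket claim that ``for $n\le 2$ any edge works'' slightly overstates the trivial cases; this is harmless here, since the lemma is only invoked for the large graphs $G_k$.
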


\begin{proof}
Let $G$ be the given graph, and let $n$ be its number of vertices.
Let $T$ be a ternary tree with the vertices of $G$ as its leaves, and with cuts whose biadjacency matrices have rank $\le r$, which exists by the definition of rank-width.  As in \cref{def:rank-width}, define the two sides of an edge $e$ of $T$ to be the two subsets of vertices of $G$ separated in $T$ by $e$. Define a side to be \emph{small} if it consists of fewer than $n/3$ vertices of $G$, and \emph{large} otherwise. If we can find an edge with no small side, it will define a balanced cut, which by construction will have rank $\le r$.

To find an edge of $T$ with no small side, start at any edge of $T$, and then construct a walk as follows. As long as the walk has reached an edge $e$ with a small side, consider the two edges of $T$ that are incident to $e$ on its large side. The large side of $e$ includes $>2n/3$ vertices of $G$ (because the other side is small), so at least one of these two edges, $e'$, separates $e$ from $>n/3$ vertices of $G$. Select $e'$ as the next edge in the walk. The walk terminates when it reaches an edge of $T$ that defines a balanced cut, but it remains to prove that this always happens.

After each step of the walk from an edge $e$ to an edge $e'$, one of the two sides of $e'$ (the side that $e'$ separates from $e$) includes $>n/3$ vertices of $G$, by construction. The other side of $e'$ can be small, but if it is, it is a strict superset of the small side of $e$.  Because the numbers of vertices on the small sides of the edges in this walk form a strictly increasing sequence of integers, they must eventually reach a number that is at least $n/3$, at which point the walk terminates.
\end{proof}

We will prove our result by showing that, for every fixed $r$, $G_k$ has no such low-rank balanced cut, contradicting \cref{lem:balanced-cut}.

\begin{definition}
Given any partition of the vertices of $G_k$ into two subsets, define a \emph{block} of the partition to be a contiguous subsequence of the vertices (as ordered along the boundary line of the drawing of $G_k$) that belongs to one of the two subsets, and is not part of any larger such contiguous subsequence.
\end{definition}

\begin{lemma}
\label{lem:few-subintervals}
If a partition of the vertices of $G_k$ into two subsets has a biadjacency matrix of rank~$\le r$, it has $\le 4r+1$ blocks.
\end{lemma}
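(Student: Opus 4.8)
The plan is to argue contrapositively: if the partition has many blocks, I will exhibit a large set of rows and columns of the biadjacency matrix whose submatrix has large rank, so the whole matrix has rank greater than $r$. The key structural fact to exploit is Observation~\ref{obs:track-neighbors}: a semicircular track at level $i$ joins a left group of $2^{i-1}$ consecutive-ish vertices to a right group of $2^{i-1}$ such vertices, with a gap of $3^{i-1}$ vertices between them, and these two groups are complete to each other across the track. So adjacency in $G_k$ is, roughly, a union of ``combinatorial rectangles'' of very controlled location and scale. The intuition is that a block boundary is a place where the partition ``cuts through'' such a rectangle, and each cut of a rectangle contributes a unit of rank.

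First I would set up the counting: suppose the partition has $b$ blocks $B_1,\dots,B_b$ in order along the boundary line. Between consecutive blocks there are $b-1$ boundaries, and these boundaries alternate between the two sides in the obvious sense. I want to show $b-1 \le 4r$, i.e.\ find roughly $(b-1)/4$ ``independent'' contributions to the rank. For this I would choose, for a suitable subset of the block boundaries, a level $i$ (depending on the local scale, i.e.\ on how far apart neighboring boundaries are) and a semicircular track at level $i$ straddling that boundary, so that one of its two groups (say the left one) lies entirely inside a single block on one side of the cut while the right group straddles the boundary — hence the right group meets both sides. Picking one vertex from the left group (a row of the matrix) and one vertex on each side within the right group (two columns, one per side) gives a $2\times 2$-ish pattern; assembling these across many boundaries and checking that the chosen rows/columns interact only ``diagonally'' (because tracks at the relevant levels have disjoint reach, again by Observation~\ref{obs:track-neighbors} and the disjoint-range argument from the previous lemma) yields a triangular submatrix of size proportional to the number of selected boundaries, hence rank proportional to $b$.

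The main obstacle I expect is the bookkeeping that turns ``many blocks'' into ``many boundaries at which a track of the right scale is available and is cleanly split by the cut.'' Two blocks could be very short, so that no single level's track separates them the way I want; I would handle this by choosing, for each boundary, the largest level $i$ such that both neighboring blocks have length at least (say) $2^{i}$ on the appropriate side, and argue that distinct selected boundaries either use disjoint vertex ranges or differ enough in level that their tracks still cannot reach each other's chosen vertices — this is exactly where the exponential gap $3^{i-1}$ versus the $2^{i}$ span is used, mirroring the disjointness argument in the proof that $G_k$ is strict outerconfluent. Organizing the selected boundaries into $O(1)$ classes (by parity of position and by whether the ``clean'' group is to the left or right) so that within each class the induced submatrix is block-triangular gives the factor $4$ and the constant $+1$. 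I would then conclude that $\operatorname{rank} \ge (b-1)/4$, so $b \le 4r+1$.
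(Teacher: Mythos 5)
Your approach is genuinely different from the paper's and it has a real gap. The paper's proof uses a much more elementary observation that you have not noticed: every two consecutive vertices on the boundary line are adjacent (via the boundary tracks), and every \emph{yellow} vertex has degree exactly two in $G_k$, being adjacent only to the two blue vertices on either side of it. So each block boundary yields a yellow--blue edge crossing the partition; among $\ge 4r+1$ such edges a pigeonhole argument finds $\ge 2r+1$ with a consistent orientation (yellow on side $A$, blue on side $B$), and taking every other one gives $\ge r+1$ of these edges that are pairwise non-adjacent. Because a yellow vertex has no neighbors other than its two flanking blue vertices, these $r+1$ edges form an \emph{induced matching} across the cut, so the corresponding submatrix of the biadjacency matrix is a permutation matrix of rank $r+1$, a contradiction. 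There is no need to reason about semicircular tracks at all.

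Your proposal instead tries to harvest rank from the semicircular tracks of various levels, and this is where it breaks down. First, the ``triangularity'' of the assembled submatrix is asserted, not established: the groups reached by two tracks at comparable levels straddling nearby boundaries can overlap, and the claim that ``distinct selected boundaries either use disjoint vertex ranges or differ enough in level'' is precisely the hard part, which you acknowledge but do not resolve. Second, and more fundamentally, your scheme chooses ``the largest level $i$ such that both neighboring blocks have length at least $2^i$'' --- but a partition can have many consecutive blocks of length $1$ (or $2$), in which case no level gives you a semicircular track with one of its two groups cleanly contained in a single block; those boundaries simply cannot be charged under your scheme, and since they can be an $\Omega(1)$-fraction (or even all) of the boundaries, the argument does not get off the ground. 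The paper's proof handles exactly these adjacent-block boundaries directly, using the length-$1$ scale edges (consecutive pairs) that your semicircle machinery ignores. If you want to salvage your route you would need a separate argument for short blocks that bottoms out in the degree-$2$ property of yellow vertices --- at which point you have rediscovered the paper's proof and the semicircle machinery is unnecessary.
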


\begin{proof}
Each two consecutive blocks contain two consecutive vertices in the ordering of $G_k$, one yellow and one blue in the alternating coloring of $G_k$ from the figure. Assume for a contradiction that there are $\ge 4r+2$ blocks. These would lead to $\ge 4r+1$ yellow--blue edges between consecutive vertices, crossing from block to block and from one side of the partition to the other. Among these, some subsequence of $\ge 2r+1$ edges all have their yellow vertices in the same subset of the partition as each other and their blue vertices in the other subset. Select the edges in odd positions of this subsequence.

This gives a subsequence of $\ge r+1$ edges of $G_k$, connecting vertices that are all consistently colored yellow in one subset of the partition and blue in the other subset. Moreover, because we selected only the edges in odd positions from a longer sequence, no two of these edges have endpoints that are consecutive in the vertex ordering of $G_k$. Because the yellow vertices of $G_k$ are adjacent only to the two consecutive blue vertices, the only yellow--blue edges connecting the selected vertices are the ones in the selected subsequence. That is, this subsequence forms an induced matching within the subgraph of edges of $G_k$ that cross the given partition. The submatrix of the biadjacency matrix, corresponding to this induced matching, is a permutation matrix of rank $\ge r+1$. This contradicts the assumption of the lemma that the whole biadjacency matrix has rank $r$. This contradiction proves that our other assumption, that there are $4r+2$ or more blocks, cannot be true.
\end{proof}

\begin{definition}
As is standard in asymptotic analysis, the ``big Omega notation'' $f(x)=\Omega\bigl(g(x)\bigr)$, where $f(x)$ and $g(x)$ are expressions with a single free variable $x$, indicates that there exist constants $x_0$ and $\alpha>0$ such that, for all $x>x_0$, $f(x)\ge \alpha\,g(x)$. More intuitively, $f$ grows at least proportionally to $g$. We also use $\Omega\bigl(g(x)\bigr)$, separated from the ``$f(x)=$'' part of this notation, to denote a quantity $q(x)$ such that $q(x)=\Omega\bigl(g(x)\bigr)$.
However, the meaning of this notation becomes unclear when there is more than one free variable: even if all variables are assumed to grow unboundedly, there may be different relations between $f$ and $g$ depending on the relative growth rates of the variables. To sidestep these issues we only use $\Omega$-notation for a single free variable. We use the modified notation $\Omega_c$, as in $f(x,c)=\Omega_c\bigl(g(x,c)\bigr)$, to indicate that $c$ is \emph{not} the free variable in the relation between $f$ and $g$. Instead, this notation is a shorthand for $\forall c\, f(x,c)=\Omega\bigl(g(x,c)\bigr)$: for each $c$, there must exist $x_0$ and $\alpha$ (both possibly depending on $c$), such that, for all $x>x_0$, $f(x,c)\ge \alpha\,g(x,c)$.
\end{definition}

\begin{lemma}
\label{lem:close-intervals}
For any constant $c$, and any balanced partition of $G_k$ forming at most $c$ blocks in each subset of the partition,
there exist two blocks on opposite sides of the partition such that the smaller of their two lengths is $\Omega_c(3^{k/c^2})$ times larger than $1+\ell$, where $\ell$ is the number of vertices of $G_k$ that lie between the two blocks in the vertex ordering of $G_k$.
\end{lemma}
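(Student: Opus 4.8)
The plan is to treat this as a purely combinatorial statement about intervals on a line: the only relevant facts are that $G_k$ has $n:=3^k$ vertices in linear order, that the two sides of the partition are each a union of at most $c$ blocks (so there are at most $2c$ blocks altogether, consecutive blocks lying on opposite sides), and that each side contains at least $n/3$ vertices. Fix $\lambda:=\alpha_c\,3^{k/c^2}$ for a suitably small positive constant $\alpha_c$, and argue by contradiction: suppose that for \emph{every} pair of blocks $A,B$ on opposite sides we have $\min(|A|,|B|)<\lambda\,(1+\ell)$, where $\ell$ is the number of vertices between $A$ and $B$. I will deduce an upper bound on $n$ that is polynomially smaller than $3^k$, which is the desired contradiction once $k$ is large enough in terms of $c$.

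The heart of the argument is a geometric ``descent'' through opposite-side pairs. Since each side holds at least $n/3$ vertices in at most $c$ blocks, each side has a block of size at least $n/(3c)$; let $P_1,Q_1$ be two such blocks, one per side, so $\min(|P_1|,|Q_1|)\ge n/(3c)=:g_1$. Inductively, suppose I have an opposite-side pair $(P_s,Q_s)$ with $\min(|P_s|,|Q_s|)\ge g_s$. If no block lies strictly between $P_s$ and $Q_s$, the standing assumption forces $\min(|P_s|,|Q_s|)<\lambda$, hence $g_s<\lambda$. Otherwise, let $\Sigma_s$ be the number of vertices strictly between the two blocks; the assumption gives $\Sigma_s>g_s/\lambda-1$, which is at least $g_s/(2\lambda)$ provided $g_s\ge 2\lambda$ (and if $g_s<2\lambda$ I already have a good-enough bound). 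Among the at most $2c$ blocks lying between $P_s$ and $Q_s$, one block $R$ then has at least $g_s/(4c\lambda)$ vertices. Since $R$ lies on one of the two sides, it is on the side opposite to exactly one of $P_s,Q_s$; pairing $R$ with that block yields a new opposite-side pair $(P_{s+1},Q_{s+1})$ with $\min(|P_{s+1}|,|Q_{s+1}|)\ge g_{s+1}:=g_s/(4c\lambda)$ and with strictly fewer blocks between its two members (at least $R$ itself is dropped).

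Since the count of blocks between the current pair starts at most $2c-2$ and strictly decreases, within at most $2c-1$ steps the descent reaches either a step where $g_s<2\lambda$ or a step where no block lies between $P_s$ and $Q_s$ (forcing $g_s<\lambda$); in either case some index $s\le 2c-1$ has $g_s<2\lambda$. Unrolling $g_{s+1}=g_s/(4c\lambda)$ from $g_1=n/(3c)$ gives $g_s=\tfrac{n}{3c}(4c\lambda)^{-(s-1)}\ge \tfrac{n}{3c}(4c\lambda)^{-(2c-2)}$ (using $4c\lambda\ge 1$, valid for $k$ large), so $\tfrac{n}{3c}(4c\lambda)^{-(2c-2)}<2\lambda$, i.e.
\[
n \;<\; 6c\,(4c)^{2c-2}\,\lambda^{2c-1}.
\]
Substituting $\lambda=\alpha_c\,3^{k/c^2}$ bounds the right-hand side by a constant (depending on $c$) times $3^{k(2c-1)/c^2}$; since $(2c-1)/c^2<1$ for every $c\ge 2$, and the case $c=1$ is immediate (there are then exactly two blocks, they are adjacent, and each has at least $n/3$ vertices), this contradicts $n=3^k$ once $k$ is sufficiently large. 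Hence the standing assumption is false, which is precisely the statement of the lemma. (The argument in fact yields the stronger bound $\Omega_c(3^{k/(2c-1)})$, but only $3^{k/c^2}$ is needed later.)

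The step I expect to require the most care is the descent itself. The obvious first attempt — take the two largest blocks on opposite sides — fails because arbitrarily many medium-sized blocks can sit between them, so the ``$1+\ell$'' denominator need not be small; the fix is to iterate, replacing one endpoint by an intervening block. Each iteration costs a factor of $4c\lambda$ in the size guarantee, but there are only $O(c)$ iterations, so the total cost is $\lambda^{O(c)}$, still polynomially below $n$ when $\lambda=3^{k/c^2}$. The remaining bookkeeping — that the ``between'' count strictly decreases, that the pair stays on opposite sides, that the invariant $\min(|P_s|,|Q_s|)\ge g_s$ survives the replacement, and that all ``$k$ large'' thresholds depend only on $c$ — is routine but must be checked.
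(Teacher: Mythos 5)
Your proof is correct, and it takes a genuinely different route from the paper's. The paper argues directly: it fixes the largest red block $R$, scans blue blocks outward from $R$ keeping only the running size record-holders, and by pigeonhole on that (length-$\le c{+}1$) sequence finds a consecutive pair whose ratio is $\Omega_c(3^{k/c})$; if the red blocks between $R$ and that jump-block are not already small, it repeats the record-holder argument once more for red blocks, losing another factor of $c$ in the exponent to land at $\Omega_c(3^{k/c^2})$. You instead argue by contradiction with a descent: begin with a large block on each side, and whenever the gap spoils the target ratio, pigeonhole on the intervening blocks to find a large one and swap it in as the new endpoint; termination comes from the fact that the number of blocks strictly between the current pair decreases at every step, which the paper never needs to invoke. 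Each of your $\le 2c-1$ rounds costs a factor of $O(c\lambda)$, so unrolling gives $n < C_c\,\lambda^{2c-1}$, and hence the sharper exponent $k/(2c-1)$ in place of $k/c^2$, exactly as you note (with the $c=1$ case handled separately, which is indeed necessary since $(2c-1)/c^2<1$ fails at $c=1$). The paper's two-phase argument is shorter to state and constructive (it names the pair explicitly), while your descent is more mechanical, makes the termination bookkeeping transparent, and yields a quantitatively better bound; both ultimately rest on the same $O(c)$-blocks-on-a-line pigeonhole.
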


\begin{proof}
Label the two subsets of the partition as ``red'' and ``blue''.
Let $R$ be the largest red block, consisting of $\Omega_c(3^k)$ vertices.
Form a sequence of blue blocks $B_1,B_2,\dots$
where $B_1$ is the closest blue block to $R$ and
each successive $B_i$, $i>1$ is the closest blue block to $R$ that is larger than $B_{i-1}$.
Consider the sequence $1,|B_1|,|B_2|,\dots$, where $|B_i|$ is the size of block $B_i$.
This sequence is short (it has length at most $c+1$) and has a large final value $\Omega_c(3^k)$, so it must sometimes increase rapidly: there must be some $i$ for which $|B_i|$ is larger than the previous value in the sequence by a factor of $\Omega_c(3^{k/c})$.

The blue blocks between $R$ and $B_i$ are all short: they are smaller than $B_i$ by a factor of $\Omega_c(3^{k/c})$.
If the red blocks between $R$ and $B_i$ are also all short then $R$ and $B_i$ are close together and they together supply the desired pair of blocks.

Otherwise, form another sequence of red blocks $R_i$ between $B_i$ and $R$, where $R_1$ is the closest red block to $B_i$ in this range,
and each successive $R_i$ is the closest red block to $B_i$ that is larger than $R_{i-1}$, finishing the sequence with $R$ itself. The sequence $|B_i|/3^{k/c}, |R_1|, |R_2|, \dots$ is short, and starts off a factor of at least $3^{k/c}$ smaller than its final value $|R|$, so it must again sometimes increase rapidly: there must be some $j$ for which $R_j$ is larger than the previous value in the sequence by a factor of $\Omega_c(3^{k/c^2})$. All blocks between $R_j$ and $B_i$ are shorter than both $R_j$ and $B_i$ by this factor, so $R_j$ and $B_i$ are close together and they together supply the desired pair of blocks.
\end{proof}

\begin{figure}
\centering\includegraphics[width=0.4\textwidth]{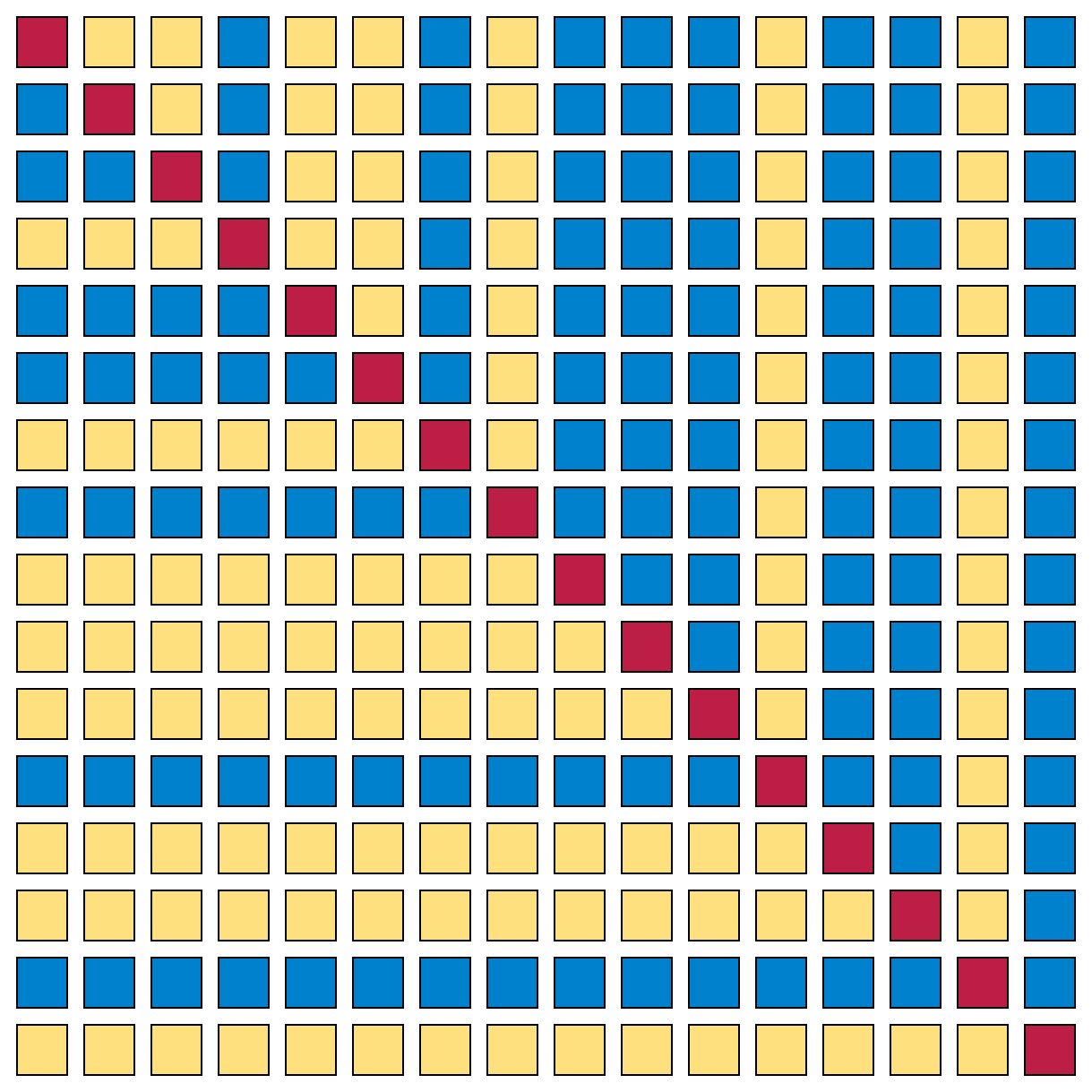}
\caption{Schematic view of a matrix described by \cref{lem:arrow}. The nonzero main diagonal entries are red. Each diagonal entry has zeroes either above it or to the left of it, shown in pale yellow. The remaining blue squares mark entries whose value can be arbitrary.}
\label{fig:arrow}
\end{figure}

\begin{lemma}
\label{lem:arrow}
Let $M$ be a square matrix over any field with the following structure: its diagonal entries are all nonzero, and for each diagonal entry either all entries above it in the same column or all entries to the left of it in the same row are zero. The remaining entries can be arbitrary. (See \cref{fig:arrow}.) Then $M$ has full rank.
\end{lemma}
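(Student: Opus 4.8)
The plan is to prove \cref{lem:arrow} by induction on the order $n$ of the square matrix $M$. The base case $n=1$ is immediate: then $M=(M_{11})$ with $M_{11}\neq 0$. For the inductive step, the key point is that we only ever need the hypothesis for the \emph{last} index $n$. By assumption, either every entry of column $n$ lying strictly above the diagonal entry $M_{nn}$ is zero, or every entry of row $n$ lying strictly to the left of $M_{nn}$ is zero (possibly both). In the first case the last column of $M$ is $M_{nn}e_n$, where $e_n$ is the $n$th standard basis vector; in the second case the last row of $M$ is likewise zero except for $M_{nn}$. Either way, a single cofactor (Laplace) expansion — along the last column in the first case, along the last row in the second — yields $\det M = M_{nn}\det M'$, where $M'$ is the $(n-1)\times(n-1)$ matrix obtained by deleting the last row and column of $M$.

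The remaining step is to verify that $M'$ again satisfies the hypotheses of the lemma, so that the inductive hypothesis applies to it. This is routine: the diagonal entries of $M'$ are $M_{11},\dots,M_{n-1,n-1}$, all nonzero, and for every index $i\le n-1$ the entries of $M'$ above $M_{ii}$ in its column (respectively, to the left of $M_{ii}$ in its row) are exactly $M_{1i},\dots,M_{i-1,i}$ (respectively $M_{i1},\dots,M_{i,i-1}$), none of which is affected by deleting the last row and column. So whichever of the two zero patterns held at index $i$ in $M$ still holds in $M'$. By induction $\det M'\neq 0$, and since $M_{nn}\neq 0$ we get $\det M\neq 0$; as $M$ is square over a field, this is exactly the assertion that $M$ has full rank.

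I do not anticipate a real obstacle, but the subtlety worth flagging is that the hypothesis is a \emph{per-index disjunction}, not a single global choice: $M$ need not become triangular under any permutation, and ``row-type'' indices (clean to the left) and ``column-type'' indices (clean above) may be interleaved arbitrarily. The induction nonetheless goes through precisely because each peeling step consumes only the last index and never has to commit to a consistent orientation across all indices. (This also explains why the tempting shortcut — take $0\neq x\in\ker M$, look at the scalar equation coming from the largest index $i$ with $x_i\neq 0$, and hope to conclude $M_{ii}x_i=0$ — does not directly work: that argument needs index $i$ to be of the favorable type, which cannot be guaranteed. Passing through the determinant avoids the issue.)
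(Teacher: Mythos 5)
Your proof is correct and takes essentially the same route as the paper: induction on $n$, peel off the last row and column using only the hypothesis at index $n$, and observe that the $(n-1)\times(n-1)$ submatrix inherits the hypotheses. The only cosmetic difference is that you certify the rank increment via Laplace expansion of the determinant, whereas the paper argues directly that the last row (or column) lies outside the row (or column) space of the others; both are standard and equally rigorous.
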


\begin{proof}
This follows by induction on the size of $M$. The submatrix formed by removing the last row and last column has full rank by induction. If the last column is zero except on the diagonal, the last row does not belong to the row space of the earlier rows; symmetrically, if the last row is zero except on the diagonal, the last column does not belong to the column space of the earlier columns. In either case, including this row or column adds one to the rank.
\end{proof}

\begin{definition}
We say that two semicircular tracks in distinct levels $i<j$ of $G_k$ are \emph{nested} whenever the horizontal interval spanned by the track at level $i$ is a subset of the horizontal interval spanned by the track at level $j$. Equivalently, every vertical line that crosses the track at level $i$ also crosses the track at level~$j$.
\end{definition}

\begin{observation}
\label{obs:nest}
When two tracks are nested, at levels $i<j$, the leftmost and rightmost vertices that can be reached from the track at level $i$ lie strictly between the leftmost and rightmost vertices that can be reached from the track at level $j$.
\end{observation}

\begin{observation}
\label{obs:toofar}
If a set of semicircular tracks is pairwise nested, let $t$ be any track in this sequence and let $L$ and $R$ be the subsets of vertices that can be reached by smooth paths from the left and right sides of $t$ respectively. Then the gap between $L$ and $R$ is longer than the interval spanned by the reachable vertices of any lower-level track (\cref{obs:track-neighbors}). Therefore, at most one of $L$ or $R$ contains vertices that can be reached by lower-level tracks in the nested set.
\end{observation}

\begin{observation}
\label{obs:nest-higher}
If a semicircular track of $G_k$ at level $i$ is not the first or last track at its level, then it is nested inside a semicircular track at level $i+1$ or level $i+2$, respectively according to whether the given track is not subdivided or is subdivided by two junctions to higher-level tracks.
\end{observation}

\begin{lemma}
\label{lem:nested-domes}
If a yellow--blue edge of $G_k$ has  at least $x$ vertices of $G_k$ before it and after it in the sequence of $3^k$ vertices on the boundary line,
then there exists a set of $\Omega(\log x)$ pairwise-nested semicircular tracks directly above it, with levels that include either $i$ or $i+1$ for each $i\le\log_3 x$.
\end{lemma}

\begin{proof}
Construct the set of pairwise-nested tracks beginning with the track at level $0$ directly above the given edge, and then continue greedily, at each step choosing a track one or two levels above the previous choice by \cref{obs:nest-higher}. The track chosen at each step is necessarily nested with all the tracks nested within the previous outermost track, by transitivity of interval inclusion. This greedy construction can only stop at a level high enough for the outermost track to be the first or last semicircular track on its level, from which it follows that there are $\Omega(\log x)$ tracks, with levels that include either $i$ or $i+1$ for each $i\le\log_3 x$.
\end{proof}

\begin{theorem}
\label{thm:clique-width}
The graphs $G_k$ have unbounded clique-width.
\end{theorem}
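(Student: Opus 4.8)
The plan is a proof by contradiction that, using the equivalence between bounded clique-width and bounded rank-width, reduces to producing in each $G_k$ a large full-rank submatrix of the biadjacency matrix of a balanced cut. Suppose the clique-width of the family $\{G_k\}$ is bounded. Then by the inequality of Oum and Seymour its rank-width is bounded by some constant $r$, so by \cref{lem:balanced-cut} every $G_k$ has a balanced cut whose biadjacency matrix has rank at most $r$. Set $c:=4r+1$. By \cref{lem:few-subintervals} this cut has at most $c$ blocks, hence at most $c$ on each side, so \cref{lem:close-intervals} applies and produces two blocks $R$ and $B$ on opposite sides of the cut, separated by $\ell$ intervening vertices along the boundary, with $m:=\min(|R|,|B|)=\Omega_c\!\bigl(3^{k/c^2}\bigr)\cdot(1+\ell)$. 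It now suffices to exhibit, for all large enough $k$, a square submatrix of this cut's biadjacency matrix of rank larger than $r$: that contradicts \cref{lem:balanced-cut}, so no constant can bound the rank-width (hence the clique-width) of all $G_k$, proving the theorem.

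To build the submatrix, assume without loss of generality that $R$ precedes $B$ along the boundary, and let $e_0$ be the yellow--blue edge joining the last vertex of $R$ to the following vertex. Because $R$ lies on one side of $e_0$ and $B$ on the other, each side of $e_0$ has at least $m$ vertices, so by \cref{obs:nested-domes} there is a nested sequence of semicircular tracks directly above $e_0$ spanning $\Omega(\log m)$ consecutive levels, with a track present in each two of them. Combining this with the reach and gap sizes in \cref{obs:track-neighbors}, a sequence track at level $i$ reaches $2^{i-1}$ vertices just left of $e_0$ and $2^{i-1}$ just right of it, separated by a gap of $3^{i-1}$ vertices centred on $e_0$; a short computation with positions shows that when $\log_3(1+\ell)+O(1)\le i\le\log_3 m-O(1)$ the left reach meets $R$ and the right reach meets $B$. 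The window of such levels has length $\log_3\!\bigl(m/(1+\ell)\bigr)-O(1)=\Omega_c(k/c^2)$, so the sequence contains $t=\Omega_c(k/c^2)$ such ``straddling'' tracks $\sigma_1\subset\sigma_2\subset\dots\subset\sigma_t$, listed by increasing level.

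For each $s$ pick a vertex $u_s\in R$ in the left reach of $\sigma_s$ and a vertex $w_s\in B$ in its right reach; then $u_s$ and $w_s$ are joined by an edge curve through $\sigma_s$, so they are adjacent. Consider the $t\times t$ submatrix of the cut's biadjacency matrix with rows $u_1,\dots,u_t$ and columns $w_1,\dots,w_t$: its diagonal is all ones. By the last sentence of \cref{obs:nested-domes}, for each $s$ exactly one of the two reaches of $\sigma_s$ is \emph{fresh}, meaning disjoint from every vertex reached by $\sigma_1,\dots,\sigma_{s-1}$. If the left reach of $\sigma_s$ is fresh, then $u_s$ is reached by no lower track, and I claim the row-$s$ entries left of the diagonal vanish; if the right reach is fresh, then $w_s$ is reached by no lower track, and the column-$s$ entries above the diagonal vanish. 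In either case the submatrix has the shape demanded by \cref{lem:arrow}, so it has rank $t=\Omega_c(k/c^2)$, which exceeds $r$ for $k$ large, giving the desired contradiction.

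The step requiring the most care is the claimed vanishing of those off-diagonal entries. An edge curve between $u_s$ and $w_{s'}$ (or $u_{s'}$ and $w_s$) with $s'\ne s$ passes through a single semicircular track $\tau$, and one must show $\tau$ cannot exist: since both endpoints lie within distance $O(m)$ of $e_0$ on opposite sides of it, the gap-width constraints of \cref{obs:track-neighbors} restrict $\tau$ to a bounded range of levels and either identify $\tau$ with one of the nested tracks $\sigma_j$ --- which is then excluded because the same-side reaches of distinct $\sigma_j$ are pairwise disjoint and because of the freshness property --- or force one of the two endpoints out of $R$ or $B$. Pinning this down requires a careful bookkeeping of levels and positions in the recursive construction (and, if necessary, discarding a bounded number of the lowest-level $\sigma_j$), and is the main obstacle; everything else is an assembly of the stated lemmas in order.
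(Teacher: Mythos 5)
Your proposal follows essentially the same route as the paper: reduce to rank-width via Oum--Seymour, extract a balanced low-rank cut (\cref{lem:balanced-cut}), bound the number of blocks (\cref{lem:few-subintervals}), apply \cref{lem:close-intervals} to find the two well-separated blocks, use \cref{obs:nested-domes} above a yellow--blue edge $e_0$ to get a long nested sequence of straddling domes, and conclude via \cref{lem:arrow}. The structure, the lemma deployment, and the intended arrow-matrix construction all match the paper's argument.

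However, there is one genuine gap, which you yourself flag in the final paragraph. The issue is in how you select the vertices forming the submatrix. You pick $u_s$ and $w_s$ \emph{arbitrarily} from the left and right reaches of $\sigma_s$ intersected with $R$ and $B$, and then invoke the freshness of one side to kill off-diagonal entries. But freshness of, say, the left reach of $\sigma_s$ only tells you that the arbitrarily chosen $u_s$ is not reached by the lower nested domes; it does not by itself prevent $u_s$ from being adjacent to $w_{s'}$, $s'<s$, via a track outside your nested sequence, nor does it say anything about where the \emph{non-fresh}-side vertex $w_s$ lives relative to the lower domes. The paper closes exactly this hole by strengthening the choice: it selects \emph{both} $r_i\in R$ and $b_i\in B$ to be connected to $\sigma_i$ but \emph{not to any} lower-level nested dome (regardless of which side is the fresh one). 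With both endpoints chosen away from the lower reaches, the last sentence of \cref{obs:nested-domes} yields directly that one of the two half-rows/columns at each diagonal entry must vanish, giving the form required by \cref{lem:arrow}. You also need to observe --- as the paper does implicitly by combining the straddling-window count with the choice of $r_i,b_i$ --- that such ``doubly fresh'' picks in $R$ and $B$ exist for $\Omega_r(k)$ levels; your window computation handles straddling but not this second nonemptiness requirement. In short: same approach, but the selection of vertices needs the paper's stronger ``not reached by any lower nested track'' condition to make the vanishing of off-diagonal entries follow from \cref{obs:nested-domes} rather than remaining an open bookkeeping task.
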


\begin{proof}
We prove that, for any constant $r$, these graphs do not have rank-width $\le r$.
Assuming for a contradiction that they do all have rank-width $\le r$, they would have a balanced partition whose biadjacency matrix has low rank, by \cref{lem:balanced-cut}.
By \cref{lem:few-subintervals} we may assume that this partition forms $O(r)$ blocks in the vertex sequence of $G_k$.
By \cref{lem:close-intervals} we may assume that some two of these blocks $R$ and $B$, on different sides of the partition,
are larger than the gap $\ell$ between them by a factor of $3^{k/O(r^2)}$

Now choose any yellow--blue edge between the two blocks and apply \cref{lem:nested-domes} to find a nested sequence of semicircular tracks above this edge. Each track on level $i$ lies above a sub-drawing of a graph isomorphic to $G_i$, spanning a subsequence of $3^i$ vertices of the overall graph, of which it can reach $2^i$. Only $O(\log\ell)$ of these nested semicircular tracks can be at such a low level that they fail to reach both $R$ and $B$. Another $\Omega(\log 3^{k/O(r^2)})= \Omega_r(k)$ of them span subsequences of vertices that lie entirely within $R$, $B$, and the gap between them, reaching both $R$ and $B$.  For each one of these tracks, at level $i$ of the drawing, by \cref{obs:nest}, the leftmost and rightmost vertices reachable from the track give $r_i\in R$ and $b_i\in B$ that are connected to this track but not to any of the lower-level nested semicircular tracks. In particular, distinct $i$ and $j$ give distinct $r_i$, $r_j$, $b_i$, and~$b_j$.

The chosen vertices $r_i$ and $b_i$, for all of these  nested semicircular tracks, induce a biadjacency matrix in which each main diagonal coefficient, corresponding to the pair $(r_i,b_i)$, is a one. By \cref{obs:toofar}, for each such coefficient, either the coefficients above it in the same column of the matrix are all zero, or the coefficients to the left of it in the same row of the matrix are all zero, forming a matrix of the form described by \cref{lem:arrow}. By \cref{lem:arrow}, its rank equals the length of the sequence of semicircular tracks. But this is $ \Omega_r(k)$, unbounded, contradicting the assumption that the rank is $\le r$.
\end{proof}

\section{Bounded twin-width}

In this section, we prove that the strict outerconfluent graphs have bounded twin-width. The notion of \emph{planification} that we use, an encoding a strict outerconfluent graph and its drawing by a plane graph, is quite similar to the notion of a first-order transduction; transductions are known to preserve bounded twin-width \citep{TW1}. Rather than applying this general approach, we work out the details more carefully, using a counting argument,
to show that strict outerconfluent graphs have bounded twin-width as ordered graphs with a natural vertex ordering coming from their drawings.

\subsection{Small hereditary classes of ordered graphs}

As \citet{TW4} showed, for hereditary families of \emph{ordered} graphs, twin-width is intimately related to the growth rate of the class. Their \emph{small conjecture}, that the same relation held more strongly for unordered graphs, was later falsified \citep{TW7}. As it may be somewhat unfamiliar to treat ordered graphs as standalone objects, we provide definitions here.

\begin{definition}
An \emph{ordered graph} is a triple $G=(V,E,<)$ where $(V,E)$ are the sets of vertices and edges of an undirected graph and $(V,<)$ is a total ordering. Its \emph{number of vertices} is $|V|$ (this is commonly called the \emph{order} of a graph, to distinguish it from the \emph{size} $|E|$, but that would obviously cause confusion with respect to $<$, so we avoid this terminology.) Two ordered graphs are \emph{isomorphic} if there is a bijection on their vertex sets that is simultaneously a graph isomorphism on their undirected graphs and an order isomorphism on their vertex orders. Isomorphism is an equivalence relation; we call its equivalence classes \emph{isomorphism classes}. All members of an isomorphism class must have the same number of vertices, so we can talk about this number for an isomorphism class rather than an individual graph. An \emph{induced subgraph} $G[S]$ of an ordered graph $G=(V,E,<)$, defined by a subset $S\subseteq V$, is another ordered graph $(S,E_S,<_S)$ where $E_S$ is the subset of $E$ consisting of edges with both endpoints in $S$, and $<_S$ is the restriction of $<$ to $S$.
\end{definition}

We consider here only finite graphs: $V$ and $E$ must be finite sets. With that assumption, there can be only finitely many isomorphism classes that have a given number of vertices. However, we still speak about \emph{classes} of graphs, rather than \emph{sets} of graphs, to emphasize that we are not restricting the vertices of the graphs to belong to any specific universal set, such as the natural numbers or the points of the plane.

\begin{definition}
A class of ordered graphs is \emph{hereditary} if it contains every induced subgraph of a graph in the class. It is \emph{small} if there exists a number $c$ such that the number of isomorphism classes of $n$-vertex graphs in the class is $O(c^n)$.
\end{definition}

The following is central to our proof that strict outerconfluent graphs have bounded twin-width. Although it is one of the key results in the theory of twin-width, we are not aware of previous uses of this lemma to prove bounded twin-width of natural classes of graphs, rather than using more direct constructions.

\begin{lemma}[\citet{TW4}]
\label{lem:small-width}
Every small hereditary class of ordered graphs has bounded twin-width. A hereditary class of (unordered) undirected graphs has bounded twin-width if and only if its graphs can be ordered to form a small hereditary class of ordered graphs.
\end{lemma}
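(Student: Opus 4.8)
The plan is to deduce the statement from two substantial results in the twin-width literature together with one elementary observation; since the lemma is used in this paper only as a black box, the aim of the sketch is to indicate where its depth lies. The elementary observation is that for an ordered graph $G$, a $d$-contraction sequence of $G$ is also a $d$-contraction sequence of its underlying undirected graph: the vertex order is inert in the definition of red edges and red degrees. Hence the twin-width of the underlying undirected graph is at most that of $G$, and if a class of ordered graphs has bounded twin-width then so does the class of their underlying undirected graphs.

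The first, and hardest, ingredient is the implication ``small hereditary class of ordered graphs $\Rightarrow$ bounded twin-width''. I would argue the contrapositive: a hereditary class $\mathcal{C}$ of ordered graphs with unbounded twin-width is not small. This splits into two steps. Step~1 is the \emph{ordered grid theorem for twin-width}: for a hereditary class of ordered graphs, bounded twin-width is equivalent to the existence of a bound $k$ such that no member's adjacency matrix, with rows and columns listed in the graph's own order, has a $k$-\emph{mixed minor} (a $k\times k$ block division each of whose blocks contains two distinct rows and two distinct columns). This equivalence is the technical heart; its forward direction, ``no large mixed minor $\Rightarrow$ bounded width'', is obtained by using the Marcus--Tardos theorem repeatedly to find a coarse row/column partition into few parts with few mixed cells and to contract within its parts, reaching a single part after $O(\log n)$ rounds while the red degree stays bounded by a function of $k$ --- that is, a bounded-width contraction sequence. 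Step~2 is a counting estimate: if for every $k$ some member of $\mathcal{C}$ has a $k$-mixed minor, then from such a minor one can extract, on $O(k)$ vertices, induced ordered subgraphs encoding an arbitrary permutation of a set of $\Omega(k)$ elements (the rows and columns belonging to distinct blocks may be kept or deleted essentially independently), so $\mathcal{C}$ contains at least $\Theta(k)!$ pairwise non-isomorphic induced ordered subgraphs on $\Theta(k)$ vertices; since $\Theta(n)!$ exceeds $b^{n}$ for every fixed $b$, $\mathcal{C}$ is not small. (Equivalently, one may invoke the classification, also in \cite{TW4}, that every non-small hereditary class of ordered graphs contains one of finitely many fundamental hereditary classes, each visibly non-small and of unbounded twin-width.)

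The second ingredient is needed to produce orderings for the ``only if'' half of the second sentence: a hereditary class $\mathcal{D}$ of undirected graphs of twin-width at most $d$ admits vertex orderings under which it becomes a sub-class of a \emph{small} hereditary class $\mathcal{C}$ of ordered graphs whose underlying undirected graphs are exactly $\mathcal{D}$. Given $G\in\mathcal{D}$, fix a $d$-contraction sequence, view it as a rooted binary contraction tree on leaf set $V(G)$, orient each internal node (choose a left child), and order $V(G)$ by the induced left-to-right leaf order; then close the resulting family of ordered graphs under induced ordered subgraphs, making it hereditary. That this class is small is exactly where bounded red degree is used: an $n$-vertex member is recoverable, up to isomorphism, from $2^{O(n)}$ bits --- the shape of the ordered contraction tree, together with, at each of its $O(n)$ internal nodes, a bounded amount of local data recording how the black and red adjacencies to the $O(d)$ relevant red-neighbour parts change at that merge --- which is the compact-encoding argument underlying the ``small classes'' theory of twin-width (see \cite{TW4} and the references therein). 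With these in hand the second sentence follows: for ``if'', a small hereditary ordered class has bounded twin-width by the first ingredient and the elementary observation transfers this to $\mathcal{D}$; for ``only if'', the second ingredient supplies the required small hereditary ordered class. I expect the main obstacle to be Step~1 of the first ingredient --- the ordered grid theorem and the Marcus--Tardos-based construction of a bounded-width contraction sequence from the absence of large mixed minors --- whereas Step~2 is a self-contained counting argument and the second ingredient is a bookkeeping encoding argument; the necessity of routing everything through ordered graphs is witnessed by \cite{TW7}, which shows the unordered analogue of the lemma fails.
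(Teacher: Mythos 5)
The paper does not prove this lemma; it is imported verbatim as a black box from Bonnet, Giocanti, Ossona de Mendez, Simon, Thomass\'e, and Toru\'nczyk \cite{TW4}, and the present paper's only role is to apply it. So there is no ``paper's own proof'' to compare against. That said, your sketch correctly identifies the architecture of the argument in the cited reference: the easy reduction from ordered to underlying-undirected twin-width; the Marcus--Tardos/grid-minor characterization of bounded twin-width for ordered binary structures as the technical core of ``small $\Rightarrow$ bounded''; the factorial growth rate forced by large mixed minors (which, as you parenthetically note, is actually established in \cite{TW4} via a Ramsey-type reduction to a finite list of fundamental obstruction classes rather than by a one-shot extraction of an arbitrary permutation --- your phrase ``essentially independently'' is too strong as stated, and the parenthetical version is the one that actually works); and the contraction-tree encoding for ``bounded $\Rightarrow$ some ordering is small''. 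This is a faithful high-level account of where the depth lies, and nothing in it conflicts with how the present paper uses the lemma.
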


Additionally, we need an algorithmic version of this result:

\begin{lemma}[\citet{TW4}]
\label{lem:small-alg}
For every small hereditary class of ordered graphs there is a polynomial-time algorithm for constructing twin-width decompositions of bounded width for the ordered graphs in the class.
\end{lemma}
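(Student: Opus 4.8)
The plan is to make the combinatorial heart of \cref{lem:small-width} effective. In the proof of that lemma, unbounded twin-width within a hereditary class of ordered graphs is witnessed, through the grid-like theorem of Bonnet et al., by arbitrarily large \emph{mixed minors} (equivalently \emph{rich divisions}) in the adjacency matrices, with rows and columns listed in the order $<$: a mixed minor of order $t$ is a pair consisting of a partition of the rows and a partition of the columns, each into $t$ consecutive intervals, such that every one of the resulting $t^2$ blocks is non-constant --- in fact ``rich'', containing two distinct rows and two distinct columns. Bonnet et al.\ further show that arbitrarily large mixed minors are incompatible with smallness: a mixed minor of order $t$ forces super-exponentially many pairwise non-isomorphic ordered induced subgraphs on $O(t)$ vertices, whereas smallness caps the number of $m$-vertex isomorphism classes at $O(c^m)=2^{O(m)}$. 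Hence every member of a small hereditary class is $t_0$-mixed-minor free for a constant $t_0$ depending only on the class, and the task reduces to constructing, in polynomial time and for a given $t_0$, a bounded-width contraction sequence of an arbitrary $t_0$-mixed-minor-free ordered graph.

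I would carry out that construction working entirely with partitions of the vertex set into \emph{consecutive intervals} of the order $<$; this is the step at which being handed the ordering is indispensable, and is what separates the problem from the NP-hard task of computing twin-width of an unordered graph. The engine is a greedy ``sweep'' that coarsens a given interval partition: scan the intervals from left to right, absorbing the next interval into the current merged interval as long as the number of non-constant blocks incident to the current merged interval stays below a fixed threshold $g(t_0)$, and starting a fresh interval otherwise. Beginning from the singleton partition and repeating this sweep produces a chain of interval partitions $\mathcal{Q}_0 \succ \mathcal{Q}_1 \succ \cdots \succ \mathcal{Q}_m$ that ends at the one-part partition. Each sweep examines every matrix entry a bounded number of times, so it runs in $O(n^2)$ time; $t_0$-mixed-minor-freeness is what forces each sweep to shrink the partition by at least a constant factor $\varepsilon(t_0)>0$, since a long run of consecutive intervals none of which can absorb its successor would assemble into a mixed minor of order exceeding $t_0$; hence only $O(\log n)$ sweeps occur. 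Finally one interpolates ordinary pairwise merges between consecutive $\mathcal{Q}_j$; by the standard conversion lemmas of twin-width theory this yields a genuine contraction sequence whose width is bounded by a constant depending only on $t_0$, hence only on the class. All of these constants are hard-coded for the class --- the algorithm never needs the value of $c$, only the derived $t_0$ --- so, exactly as stated, there is one polynomial-time algorithm per class.

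The crux, and the step I expect to be the real obstacle, is the correctness of the greedy sweep: one must prove, from $t_0$-mixed-minor-freeness alone, that the sweep always makes steady progress, with a quantitative trade-off between the length of a ``stuck'' run of intervals and the threshold on non-constant blocks that is uniform in $n$. This is precisely the hard (non-algorithmic) content of \cref{lem:small-width}, re-proved so as to expose that the witnessing partitions can be built from the bottom up by local, greedily-discoverable moves rather than guessed globally; I would follow the ``division'' machinery of Bonnet et al.\ and invoke their quantitative bounds. A secondary and more routine item is the red-degree bookkeeping: verifying that coarsening does not push any part's number of non-constant blocks above $g(t_0)$, and that expanding a coarsening step into a sequence of pairwise merges inflates red degrees by at most a constant factor --- both are standard arguments in twin-width theory but must be carried out with care, since a careless coarsening can transiently raise a part's red degree.
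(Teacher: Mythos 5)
The paper does not prove this lemma at all: it is stated as a citation to Bonnet et al.\ and used as a black box, so there is no ``paper's own proof'' to compare against. What you have written is a reconstruction of the argument inside the cited work, not an alternative to anything in this paper.

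As a reconstruction, your outline does track the shape of the Bonnet et al.\ argument: smallness of a hereditary ordered class forces a uniform bound $t_0$ on the order of mixed minors in the ordered adjacency matrices, and for $t_0$-mixed-minor-free ordered matrices one builds a contraction sequence by repeatedly coarsening interval partitions of the given order, with mixed-minor-freeness supplying the progress guarantee. The reliance on the given vertex order to restrict attention to contiguous intervals is exactly the right observation, and it is indeed the feature that makes the problem tractable where unordered twin-width computation is not. However, you openly defer the central step---the quantitative claim that a greedy sweep of interval merges must shrink the partition by a constant factor under $t_0$-mixed-minor-freeness---to ``the division machinery of Bonnet et al.'' rather than proving it. That step is the entire non-trivial content of the lemma; with it taken on faith your argument is circular in the sense that it re-cites the source it is supposed to replace. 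So the proposal is best read as a correct high-level summary of why the cited theorem is true, rather than a proof. For the purposes of this paper that is fine, because the paper itself only invokes the result, but you should be clear-eyed that you have not supplied the hard part, and that the ``secondary'' bookkeeping you mention (bounding red degree during coarsening, and converting coarsenings into pairwise merges without blowing up red degree) is itself not entirely routine and requires the notion of an \emph{interval-respecting} contraction sequence to be set up carefully.
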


\subsection{Ordering outerconfluent graphs}

To apply \cref{lem:small-width} and \cref{lem:small-alg} to outerconfluent graphs, we need to describe these as ordered graphs, rather than merely as graphs. There is an obvious ordering to use for their vertices, the ordering around the boundary of the disk on which these graphs are drawn; this is a cyclic ordering rather than a linear ordering, but that is a mere technicality.

\begin{definition}
Define an \emph{ordered strict outerconfluent drawing} to be a strict outerconfluent drawing within a specified oriented topological disk, together with a choice of one vertex of the drawing to be the start of its linear order. For technical reasons we require all tracks and junctions of the drawing to be interior to the disk, rather than touching its boundary at non-vertex points as some tracks from \cref{fig:non-tree-like} do; this does not restrict the class of graphs that may be drawn in this way. As a special case we allow drawings with no vertices, tracks, or junctions, representing the empty graph, despite the inability of choosing a starting vertex in this case. The \emph{ordered graph} of an ordered strict outerconfluent drawing is the ordered graph $G=(V,E,<)$ for which $(G,E)$ is the undirected graph depicted in the drawing, and $<$ is the clockwise ordering of vertices around the boundary of the disk of the drawing, starting from the designated starting vertex. A \emph{strict outerconfluent ordered graph} is any graph that is the ordered graph of an ordered strict outerconfluent drawing. Two ordered strict outerconfluent drawings are \emph{topologically equivalent} if there is a smooth homeomorphism of the plane that maps each vertex, track and junction of one drawing to a corresponding vertex, track, or junction of the other drawing, preserving the clockwise orientation of the vertices around the disk and preserving the choice of starting vertices.
\end{definition}

\begin{observation}
\label{obs:top-equiv}
The ordered graphs of topologically equivalent ordered strict outerconfluent drawings are isomorphic ordered graphs.
\end{observation}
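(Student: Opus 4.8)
The plan is to show that the vertex bijection induced by the given smooth homeomorphism is simultaneously an isomorphism of undirected graphs and an order isomorphism, which is exactly what it means to be an isomorphism of ordered graphs. Let $h$ be a smooth homeomorphism of the plane witnessing the topological equivalence of two ordered strict outerconfluent drawings $D_1$ and $D_2$, so that $h$ carries each vertex, track, and junction of $D_1$ to a vertex, track, or junction of $D_2$ (respectively), preserves the clockwise orientation of the vertices around the disk, and preserves the choice of starting vertex. Let $\phi$ be the restriction of $h$ to the vertex set of $D_1$; by hypothesis this is a bijection onto the vertex set of $D_2$.

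First I would check that $\phi$ is an order isomorphism. Because $h$ preserves the clockwise orientation of the boundary vertices and the designated starting vertex, it preserves the clockwise cyclic order of the vertices around the disk and preserves the point at which the linear order begins. The linear order $<$ of an ordered strict outerconfluent drawing is defined as exactly this clockwise order starting from the designated vertex, so $\phi$ respects $<$.

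Next I would check that $\phi$ is a graph isomorphism. Adjacency in the graph of an ordered strict outerconfluent drawing is governed entirely by edge curves (\cref{def:outerconf}): two vertices are adjacent if and only if they are the endpoints of a smooth curve lying within the union of tracks and passing only through tracks and junctions. Since $h$ maps the tracks and junctions of $D_1$ bijectively onto those of $D_2$, it maps the union of tracks of $D_1$ onto the union of tracks of $D_2$. Because $h$ is a smooth homeomorphism, it carries smooth curves to smooth curves and preserves tangency of curves meeting at a common point; hence it carries the "same slope" condition at junctions to the corresponding condition in $D_2$. It follows that $h$ carries each edge curve of $D_1$ to an edge curve of $D_2$ with endpoints $\phi(u),\phi(v)$, and applying the same argument to $h^{-1}$ shows the correspondence is a bijection between edge curves. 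Therefore $uv$ is an edge in the graph of $D_1$ if and only if $\phi(u)\phi(v)$ is an edge in the graph of $D_2$, so $\phi$ is a graph isomorphism.

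Combining these two facts, $\phi$ is an isomorphism of the ordered graphs of $D_1$ and $D_2$. The only point needing any care is the assertion that a smooth homeomorphism sends edge curves to edge curves: this reduces to noting that each clause of the definition of an edge curve — being contained in the union of tracks, being a smooth curve, and the equal-slope condition at junctions — is invariant under a smooth homeomorphism, so I expect no genuine obstacle, only this routine verification.
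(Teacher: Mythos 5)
The paper states this as an \emph{observation} and offers no proof, treating it as a self-evident consequence of the definitions; your write-up correctly fills in exactly the routine verification that the paper leaves implicit. Your argument matches the intended reading: the smooth homeomorphism preserves the track/junction structure and tangency (hence edge curves and adjacency), and preserves clockwise orientation and the start vertex (hence the linear order), which together is what it means for the two ordered graphs to be isomorphic.
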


\begin{definition}
If $D$ is an ordered strict outerconfluent drawing, an \emph{induced subdrawing} $D[S]$, for a given subset $S$ of the vertices of the drawing, is obtained from $D$ by the following steps:
\begin{itemize}
\item Remove from $D$ all vertices that do not belong to $S$.
\item Remove from $D$ all tracks and junctions that do not belong to smooth curves connecting pairs of vertices in $S$.
\item While any remaining junction has exactly two tracks meeting at it (necessarily forming a locally smooth curve at that junction), remove the junction and replace the two tracks by their union, so that all remaining junctions form the meeting point of three or more tracks.
\item If $S$ is non-empty, select the starting vertex of the ordered induced drawing to be the vertex of $S$ that appears earliest in the vertex ordering of $D$.
\end{itemize}
\end{definition}

\begin{lemma}
\label{lem:induced-subdrawing}
If $G$ is the ordered graph of an ordered strict outerconfluent drawing $D$, and $S$ is any subset of the vertices of $G$, then the induced subgraph $G[S]$ is the ordered  graph of the induced subdrawing $D[S]$.
\end{lemma}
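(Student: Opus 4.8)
The plan is to check three things against the relevant definitions: that $D[S]$ is actually an ordered strict outerconfluent drawing, that the undirected graph it depicts is $G[S]$, and that its induced linear order is $<_S$. The order is the easy part: $D[S]$ sits in the same oriented disk as $D$, its vertices are the elements of $S$ in their original boundary positions, and its designated starting vertex is by construction the $<$-least element of $S$, so reading clockwise from it reproduces the restriction $<_S$. The degenerate case $S=\emptyset$ is also immediate, since the second step of the construction then deletes every track and junction, leaving the empty drawing, which depicts the empty graph $G[\emptyset]$.

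The core of the argument is a correspondence that I would isolate as a sub-claim: the edge curves of $D[S]$ are in natural bijection with the smooth vertex-to-vertex curves of $D$ whose endpoints both lie in $S$. Undoing the degree-two junction suppressions (re-inserting each suppressed junction) exhibits the union of tracks of $D[S]$ as a subset of the union of tracks of $D$ and sets up a smoothness- and endpoint-preserving bijection between curves on the two sides; moreover a smooth curve of $D$ between two vertices of $S$ automatically has all of its tracks and junctions lying on a smooth curve connecting a pair of vertices of $S$ (namely itself), so it survives the deletion of tracks and junctions, which is why ``endpoints in $S$'' is the only condition that needs tracking.

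Granting the sub-claim, the rest follows from the strictness of $D$. Each edge of $G$ is realized by exactly one edge curve of $D$, and every edge curve of $D$ joins two distinct adjacent vertices; restricting to endpoints in $S$, these edge curves are in bijection with the edges of $G[S]$, and by the sub-claim they also biject with the edge curves of $D[S]$. Hence $D[S]$ has no self-loop or non-adjacent edge curves, each pair adjacent in $G[S]$ is joined by one edge curve, and two distinct edge curves of $D[S]$ between a common pair would pull back to two distinct such curves in $D$, contradicting uniqueness there. For the remaining clauses of \cref{def:outerconf}: a junction surviving the deletion step lies interior to some surviving vertex-to-vertex curve, hence has at least two surviving incident tracks, so after suppression every junction has degree at least three, inheriting the common-slope condition from $D$ (suppression merges only tracks that already meet smoothly); every surviving track lies on a vertex-to-vertex curve with both endpoints in $S$ by the defining property of the deletion step, hence lies on an edge curve of $D[S]$; and deletions and concatenations keep vertices on the boundary and tracks and junctions interior. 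Thus $D[S]$ is an ordered strict outerconfluent drawing with ordered graph $(S,E_S,<_S)=G[S]$.

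The step I expect to be the main obstacle is making the junction-suppression step watertight: that re-inserting degree-two junctions is a bijection on smooth curves in both directions (so adjacencies are neither created nor destroyed and uniqueness transfers back to $D$), that the suppression is well defined because every surviving degree-two junction really does have its two tracks meeting smoothly---as the parenthetical in the definition asserts, and as follows from the common-slope condition together with that junction lying interior to a surviving smooth curve---and that no pathology such as a vertex-free cycle of suppressible junctions arises, which is ruled out because the deletion step retains only features lying on vertex-to-vertex curves. Everything else is bookkeeping against the definitions.
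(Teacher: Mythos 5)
Your proof is correct and follows essentially the same route as the paper's: both verify that the deletion and junction-suppression steps preserve the set of smooth vertex-to-vertex curves with endpoints in $S$, deduce strictness of $D[S]$ from strictness of $D$, and check the ordering and the remaining technical conditions of the definition. You spell out more carefully the points the paper leaves implicit (the bijection on edge curves, the smoothness of surviving degree-two junctions, the impossibility of vertex-free cycles of suppressible junctions, and the empty case), but it is the same argument.
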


\begin{proof}
The removal of vertices, tracks, and junctions from $D$ leaves only the vertices in $G[S]$, and is defined in a way that does not change the existence of smooth curves between these vertices. The replacement of pairs of tracks by their union also does not affect the existence of smooth curves between pairs of vertices, as each such curve can only use both replaced tracks, or neither. The choice of starting vertex in the induced subdrawing is made in a way that causes the vertex ordering of the subdrawing to be the induced ordering of the vertex ordering of $D$.
Because $D$ is assumed to be strict, it has no multiple adjacencies between vertices, nor loops from a vertex to itself. Removing tracks from $D$ to form $D[S]$ cannot create new multiple adjacencies or loops, so this remains true in $D[S]$. Additionally, the removal of unused tracks and junctions from $D$ to form $D[S]$, and the merger of tracks at two-track junctions, ensures that in $D[S]$ the technical requirements of having no unused tracks or junctions, and having at least three tracks at each junction, are maintained.
\end{proof}

\begin{corollary}
\label{cor:hereditary}
The ordered strict outerconfluent graphs are a hereditary class of ordered graphs.
\end{corollary}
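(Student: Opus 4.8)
The plan is to obtain this as an immediate consequence of \cref{lem:induced-subdrawing}. By the definition of a hereditary class of ordered graphs, it suffices to show that whenever $G$ is an ordered strict outerconfluent graph and $S$ is a subset of its vertices, the induced subgraph $G[S]$ is again an ordered strict outerconfluent graph. So I would start by fixing an ordered strict outerconfluent drawing $D$ whose ordered graph is $G$, which exists by definition, and then form the induced subdrawing $D[S]$.

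The one thing that needs checking is that $D[S]$ is itself a legitimate ordered strict outerconfluent drawing, so that it witnesses membership of $G[S]$ in the class. Nearly all of this is already established inside the proof of \cref{lem:induced-subdrawing}: deleting vertices, tracks, and junctions and then suppressing any degree-two junction creates no loops and no multiple adjacencies, leaves every surviving track on some edge curve, leaves at least three tracks meeting at every surviving junction, and keeps all vertices on the boundary of the disk. The remaining requirements are routine: tracks and junctions stay interior to the disk because we only ever remove features, never add them, and the degenerate case $S=\emptyset$ is covered by the explicit convention allowing the empty drawing. Once $D[S]$ is seen to be a valid ordered strict outerconfluent drawing, \cref{lem:induced-subdrawing} identifies its ordered graph as exactly $G[S]$, so $G[S]$ lies in the class and heredity follows.

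The only potential friction is bookkeeping rather than mathematics: one must track every clause of \cref{def:outerconf} together with the ``ordered'' refinement (a chosen starting vertex; all tracks and junctions interior) and confirm each survives the subdrawing operation, and in particular confirm that the vertex order of $D[S]$ is the order of $D$ restricted to $S$ --- this is why the subdrawing is defined to take the $D$-earliest surviving vertex as its new start, as noted in the last sentence of the proof of \cref{lem:induced-subdrawing}. With that observation the corollary is essentially a one-line deduction from the lemma.
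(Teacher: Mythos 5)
Your proposal is correct and matches the paper's proof: both derive heredity directly from \cref{lem:induced-subdrawing} by noting that each induced subgraph of a strict outerconfluent ordered graph is the ordered graph of the corresponding induced subdrawing. You are a bit more explicit than the paper about verifying that $D[S]$ satisfies every clause of the definition, but that checking is already done inside the proof of \cref{lem:induced-subdrawing}, so the two arguments are the same.
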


\begin{proof}
Each such graph has an ordered outerconfluent drawing representing it, and each of its induced subgraphs comes from the corresponding induced subdrawing by \cref{lem:induced-subdrawing}. Therefore, each induced subgraph has a drawing representing it, and remains in the same class of graphs.
\end{proof}

\subsection{Smallness}

To prove that the ordered strict outerconfluent graphs form a small class, we combine a general principle (beyond the scope of this paper to formalize) that the number of planar diagrams with a given number of features is singly exponential in the number of features, and a result from \citet{EppHolLof-JoCG-16} that certain strict confluent drawings have a linear number of features. We begin with a bound on the number of planar diagrams, as we need it here.

\begin{definition}
A \emph{plane graph} is a graph together with a non-crossing drawing in the plane: a point for each vertex, a smooth curve connecting the two endpoints of each edge, and no points of intersection between edge curves other than at their shared endpoints. Two plane graphs are \emph{topologically equivalent} if there is a homeomorphism of the plane mapping each feature of one to a corresponding feature of the other. A  plane graph is \emph{maximal} if it is not possible to add any more edges to the graph and corresponding edge curves to the drawing, connecting pairs of existing vertices that were not previously connected. A \emph{face} of a plane graph is a connected component of the topological space formed by removing the vertices and edge curves from the plane.
\end{definition}

The following facts are standard in topological graph theory:

\begin{lemma}
Every planar graph has a plane drawing. The maximal plane graphs on $n$ vertices, for $n\ge 3$, have exactly $3n-6$ edges and exactly $2n-4$ faces, each bounded by three edges of the graph. $2n-3$ of these faces are bounded, and one is unbounded. Their graphs, the \emph{maximal planar graphs}, each have exactly $4n-8$ equivalence classes of drawings, under topological equivalence, where an equivalence class is determined by the choice of which triangle in the graph is to be the outer face and how it is to be oriented.
\end{lemma}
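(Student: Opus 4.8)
This statement collects several classical facts, so the plan is to assemble them rather than to develop anything new. For the first sentence I would note that a crossing-free drawing on the sphere transfers to the plane by stereographic projection from an interior point of any face; since planarity is (equivalently) defined by the existence of such a sphere drawing, every planar graph has a plane drawing. (F\'ary's theorem even supplies a straight-line one, but that is more than is needed here.)

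For the edge and face counts I would run the standard two-step argument. First, in a maximal plane graph $G$ on $n\ge 3$ vertices every face is bounded by exactly three edges: $G$ is connected (two vertices in distinct components share a face, so an edge could be inserted between them), and in fact $2$-connected (a cut vertex would leave a face incident to vertices on both sides, across which an edge could be inserted), so every face boundary is a cycle. If some facial cycle had length $k\ge 4$, then picking two non-consecutive vertices $u,w$ on it, either $uw\notin E(G)$ and the chord $uw$ can be drawn inside the face, or $uw\in E(G)$, in which case a Jordan-curve argument (the edge $uw$ together with part of the facial cycle encloses some but not all of the remaining vertices) produces another non-consecutive pair on the cycle that must be non-adjacent, whose chord can then be inserted inside the face; either way maximality is contradicted. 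Once every face is a triangle, double counting the face--edge incidences gives $2e=3f$, which together with Euler's formula $n-e+f=2$ yields $e=3n-6$ and $f=2n-4$; exactly one of these faces is unbounded and the rest are bounded.

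The part with real content is the count of $4n-8$ equivalence classes of drawings. For $n=3$ the graph is $K_3$ and the four drawings can be listed by hand. For $n\ge 4$ a maximal planar graph is $3$-connected (a separating pair of vertices would again produce a non-triangular face), so Whitney's theorem on the uniqueness of embeddings of $3$-connected planar graphs applies: $G$ has a well-defined set of facial cycles --- namely the $2n-4$ triangles counted above --- and exactly two plane rotation systems, one of each handedness. A plane drawing of $G$, taken up to topological equivalence (a homeomorphism of the plane respecting the identity of each vertex and edge), is then determined precisely by which of the $2n-4$ triangular faces becomes the unbounded face together with which of the two orientations is used; conversely each of these $2(2n-4)$ combinations is realized by an actual plane drawing, and distinct combinations give inequivalent drawings since a topological equivalence preserves both the outer face and the handedness. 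Hence there are $4n-8$ equivalence classes.

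I expect the main obstacle to be the careful verification that maximality forces triangular faces, specifically in the subcase where the would-be chord already occurs elsewhere in the drawing; handling this cleanly needs the $2$-connectivity of maximal plane graphs together with a Jordan-curve argument, and it is the place where one must be most careful not to reason circularly. The $4n-8$ count, by contrast, is almost immediate from Whitney's theorem once it is set up correctly; the only delicate point there is to fix exactly what ``topological equivalence'' means for a \emph{plane} graph (a homeomorphism of the plane that respects the labels of the vertices and edges, \emph{not} merely the abstract graph), so that automorphisms of the underlying abstract graph do not enter the count.
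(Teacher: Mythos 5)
The paper does not give a proof of this lemma at all; it states these facts as ``standard in topological graph theory'' and moves on, using them only as ingredients in \cref{cor:plane-small}. So there is no paper proof to compare against. That said, your plan is the standard textbook argument and it is essentially sound: Euler's formula plus the triangulation observation for the counts, and Whitney's rigidity theorem for $3$-connected planar graphs (two rotation systems, one per handedness) combined with the choice of outer face for the $4n-8$ count. Your emphasis that topological equivalence of \emph{plane} graphs must respect vertex and edge identities (so that graph automorphisms do not collapse classes) is exactly the point that makes the $4n-8$ formula clean, and your treatment of the non-triangular-face case via a Jordan-curve argument is the right way to avoid circularity.

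Two small corrections, both concerning boundary and bookkeeping. First, the statement itself contains a typo: with $2n-4$ faces total, one of which is unbounded, the number of bounded faces is $2n-5$, not $2n-3$; your ``exactly one is unbounded and the rest bounded'' quietly gives the right count, so you should not try to match the stated $2n-3$. Second, your handling of $n=3$ is off: $K_3$ has two faces but only one triangle as a subgraph, and the two spherical faces are exchanged by a reflection, so $K_3$ has only two plane drawings up to labelled topological equivalence, not four. The formula $4n-8$ is correct only from $n\ge 4$, where $3$-connectivity makes Whitney's theorem apply and distinct triangular faces are bounded by distinct triangles; the lemma's ``$n\ge 3$'' should really read ``$n\ge 4$,'' and you should either restrict your claim accordingly or note the $n=3$ exception rather than asserting four drawings there. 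Neither issue affects the paper's downstream use, which only needs singly exponential upper bounds.
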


\begin{lemma}[\citet{Tur-DAM-84}]
The number of isomorphism classes of planar graphs with $n$ vertices is at most $2^{12n}$.
\end{lemma}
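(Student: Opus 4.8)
The plan is to reduce the count to triangulations and then encode triangulations in a linear number of bits.

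For $n\le 2$ the claim is trivial (there are at most two graphs), so assume $n\ge 3$. The first step is to pass from arbitrary planar graphs to maximal ones. Every planar graph $G$ on $n$ vertices is a spanning subgraph of a maximal planar graph $G^{+}$ on the same vertex set, obtained by taking a plane drawing of $G$ and adding edges (with corresponding edge curves) while the drawing stays plane, until none can be added. Consequently, choosing for each isomorphism class $\mathcal C$ of $n$-vertex planar graphs a representative $G$, a maximal planar $G^{+}\supseteq G$, and an isomorphism from $G^{+}$ to a fixed representative of its isomorphism class, records $\mathcal C$ faithfully by (a) the isomorphism class of $G^{+}$ and (b) the image of $E(G)$ as a subset of the $3n-6$ edges of that fixed representative (using the preceding lemma for the edge count). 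This map is injective, so the number of isomorphism classes of $n$-vertex planar graphs is at most $2^{\,3n-6}$ times the number of isomorphism classes of $n$-vertex maximal planar graphs.

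The second step bounds the latter. Each isomorphism class of maximal planar graphs is the graph of at least one maximal plane graph, so there are at most as many such classes as there are topological-equivalence classes of $n$-vertex maximal plane graphs, and rooting each such drawing at an oriented edge of its outer face (the outer face has at least three edges) exhibits this in turn as at most the number of \emph{rooted plane triangulations} with $n$ vertices.

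The third step --- the technical heart --- is to bound the number of rooted plane triangulations with $n$ vertices by $2^{O(n)}$, with a constant small enough that, together with the factor $2^{3n-6}$ above, the total stays below $2^{12n}$. The clean way is an injective encoding of rooted plane triangulations by bit strings of length $O(n)$. One route uses a canonical shelling order $v_{1},\dots,v_{n}$ in which $v_{k}$ attaches to a contiguous arc of the outer boundary of the triangulation on $v_{1},\dots,v_{k-1}$: while maintaining a moving cursor on the current boundary cycle, one records only the cursor's displacement and the number of new edges created at each step, and an amortization argument shows the total displacement and the total number of new edges are both $O(n)$, so the encoding fits in $O(n)$ bits. (Alternatively one can invoke Tutte's enumeration of rooted planar triangulations, which is asymptotically $\Theta\!\bigl((256/27)^{n}n^{-5/2}\bigr)=2^{O(n)}$, or use Schnyder woods, or the explicit traversal-based encoding of Turán~\cite{Tur-DAM-84}, which is where the constant $12$ in the statement originates.) Combining the three steps gives at most $2^{3n-6}\cdot 2^{O(n)}=2^{O(n)}$ isomorphism classes; Tutte's estimate, for instance, makes this at most $2^{6.3n}\,\mathrm{poly}(n)\le 2^{12n}$, and the remaining small values of $n$ are subsumed since $2^{12n}$ already exceeds the total number $2^{\binom{n}{2}}$ of labeled $n$-vertex graphs there.

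The main obstacle is exactly the third step: the naive description of where each new vertex attaches costs $\Theta(\log n)$ bits per vertex and yields only $2^{O(n\log n)}$. What makes the bound singly exponential is the amortized cursor argument (equivalently, the classical balanced-parenthesis encoding of planar maps), which drives the per-vertex cost down to $O(1)$. Everything else is bookkeeping with the facts already established about maximal plane graphs.
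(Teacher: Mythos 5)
The paper does not prove this lemma---it is cited directly from Tur\'an~\cite{Tur-DAM-84}---so there is no in-paper proof to compare against; I will assess your argument on its own terms. Your reconstruction is correct and follows the standard route: pass from arbitrary $n$-vertex planar graphs to maximal ones at a cost of $2^{3n-6}$ for recording the edge subset; pass from isomorphism classes of maximal planar graphs to rooted plane triangulations by fixing a plane drawing and an oriented outer root edge; and bound the number of rooted plane triangulations by a singly exponential quantity. The injection in the first step is well-posed, and the chain of inequalities (isomorphism classes $\le$ topological equivalence classes $\le$ rooted classes) goes in the right direction at each stage. For the last step, invoking Tutte's enumeration is the cleanest of your alternatives: the count of rooted plane triangulations on $n$ vertices is $\Theta\bigl((256/27)^{n} n^{-5/2}\bigr) = 2^{(\log_2(256/27))n + O(\log n)} \approx 2^{3.25n}$, so the product with $2^{3n-6}$ is roughly $2^{6.25n}$, comfortably below $2^{12n}$, and the residual small-$n$ cases are covered by the trivial $2^{\binom{n}{2}}$ bound as you note. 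The one soft spot is that the amortized cursor/shelling encoding is only sketched, not carried out; since you also offer Tutte's formula and Tur\'an's original encoding as closed alternatives, this is a matter of exposition rather than a substantive gap. Your remark that the particular constant $12$ is an artifact of Tur\'an's explicit $O(n)$-bit encoding rather than a combinatorial necessity is correct.
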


\begin{corollary}
\label{cor:plane-small}
The number of equivalence classes of plane graphs, under topological equivalence, is at most $c^n$ for some $c>0$.
\end{corollary}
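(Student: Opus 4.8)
The plan is to reduce the count of equivalence classes of plane graphs to the count of maximal plane graphs, for which the preceding lemma and Tur\'an's bound already give singly-exponential control. The key structural fact I would use is that every plane graph on $n\ge 3$ vertices is obtained from some maximal plane graph on the same vertex set by deleting a subset of its $3n-6$ edge curves. This follows from a standard triangulation argument: one can repeatedly add non-crossing edges to the drawing --- first joining distinct connected components through a common face, then ``splitting off'' a cut vertex that appears more than once on a face boundary, and finally inserting a chord into any face bounded by a cycle of length greater than three --- until every face of the drawing is a triangle, at which point the drawing is that of a maximal plane graph. I would state this fact with only a brief justification, since it is classical.

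Given the structural fact, the counting goes as follows. Fix $n\ge 3$. By Tur\'an's lemma there are at most $2^{12n}$ isomorphism classes of planar graphs, hence at most that many isomorphism classes of maximal planar graphs. By the preceding lemma, each maximal planar graph has exactly $4n-8$ equivalence classes of drawings under topological equivalence, and each such drawing has exactly $3n-6$ edge curves, hence at most $2^{3n-6}$ subsets of edge curves. By the structural fact, every equivalence class of $n$-vertex plane graphs arises from at least one triple consisting of an isomorphism class of a maximal planar graph, one of its $4n-8$ drawing classes, and a subset of that drawing's edge curves; so the number of equivalence classes of $n$-vertex plane graphs is at most
\[
2^{12n}\cdot(4n-8)\cdot 2^{3n-6}\ \le\ (4n-8)\,2^{15n}.
\]

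Finally, I would absorb the polynomial factor and the finitely many small cases into the constant. Since $4n-8\le 2^{n}$ for all sufficiently large $n$, the displayed bound is at most $2^{16n}$ for large $n$; and since there are only finitely many values of $n$ below this threshold, each contributing only finitely many equivalence classes, one can enlarge the constant to obtain a single $c>0$ for which the number of equivalence classes is at most $c^n$ for every $n$ (including the degenerate cases $n=0,1,2$). The only step that needs real care is the triangulation/extension claim, in particular its disconnected and low-connectivity sub-cases; the rest is bookkeeping.
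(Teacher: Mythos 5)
Your proof is correct and follows essentially the same route as the paper's: count maximal planar graphs via Tur\'an's bound, multiply by the $4n-8$ topological equivalence classes of drawings per maximal planar graph, and multiply by $2^{3n-6}$ for the choice of edges to delete. The extra care you take with the triangulation claim, the polynomial factor, and the small-$n$ cases is reasonable but does not change the argument.
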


\begin{proof}
By Tur\'an's lemma, the number of maximal planar graphs is at most $2^{12n}$, from which it follows that the number of topological equivalence classes of maximal plane graphs is at most $(4n-8)2^{12n}$. Every plane graph can be obtained by removing some subset of the $3n-6$ edges of a maximal plane graph, so the number of topological equivalence classes of plane graphs is at most $(4n-8)2^{12n}2^{3n-6}$.
\end{proof}

The bounds stated above are far from tight, but this is unimportant for our results. To apply Tur\'an's lemma to strict outerconfluent drawings, we need to transform them into plane graphs.

\begin{definition}
\label{def:planification}
A \emph{face--vertex incidence} of a plane graph $G$ is a pair of a vertex of $G$ and a face of $G$ that has that vertex on its boundary.
Given an ordered strict outerconfluent drawing $D$, we define a \emph{planification} of $D$ to be a tuple $(G,o,s,S)$, where $G$ is a plane graph, $o$ and $s$ are vertices of $G$, and $S$ is a subset of the face-vertex incidences of $G$, constructed as follows:
\begin{itemize}
\item The vertices of $G$ consist of the vertices and junctions of $D$, and an additional vertex, $o$, placed outside the disk in which $D$ is drawn.
\item Vertex $s$ is the start vertex in the ordering of $D$.
\item The edge curves of $G$ consist of the tracks of $D$, together with additional curves, drawn outside the disk in which $D$ is drawn and disjoint from each other, connecting $o$ to each vertex of $D$. Two vertices are adjacent in $G$ exactly when they are connected by one of these curves. (Note that, in a strict outerconfluent drawing, it is impossible for two or more tracks to connect the same two vertices or junctions, as this would in all cases result in a loop or an unused track.)
\item $S$ consists of the \emph{sharp angles} of $D$, in the sense described by \citet{EppHolLof-JoCG-16}: they are the face--vertex incidences where the vertex of $G$ is a junction of $D$, at which the two tracks on either side of the face at that vertex do not have a smooth union. Necessarily, $S$ includes all but two of the vertex--face incidences at each junction. Two tracks at any junction have a smooth union if and only if, in the cyclic ordering of tracks at that junction, the two non-sharp angles separate the two tracks from each other.
\end{itemize}
\end{definition}

As we show, this completely encodes the combinatorial information in a strict outerconfluent drawing, in the following sense:

\begin{lemma}
\label{lem:planification}
Let $(G,o,s,S)$ be any planification of any ordered strict outerconfluent drawing $D$, and let $(G',o',s',S')$ be any tuple of a plane graph topologically equivalent to $G$, the vertices corresponding to $o$ and $s$ in $G'$ under the topological equivalence, and the subset of vertex--face incidences corresponding to $S$ under the topological equivalence. Then from $(G',o',s',S')$ we can construct a strict outerconfluent drawing $D'$ that is topologically equivalent to $D$.
\end{lemma}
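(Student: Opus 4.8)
The plan is to run the planification construction in reverse: from the topological data $(G',o',s',S')$ I will rebuild a strict outerconfluent drawing $D'$, and then check both that $D'$ is legitimate and that it is topologically equivalent to $D$. If $D$ is empty the statement is vacuous, so assume it is not, and fix a homeomorphism $h$ of the plane realizing the topological equivalence of $G'$ with some planification $(G,o,s,S)$ of $D$, with $h(o')=o$, $h(s')=s$, $h(S')=S$ (taking $h$ orientation-preserving, consistent with treating the plane graphs here as carrying the orientation of $D$'s disk). The first step is to recover the disk. In $G'$ the vertex $o'$ is adjacent precisely to the vertices of the graph being drawn, and the cyclic order of edges around $o'$ encodes the order these vertices must appear in on the disk boundary, with $s'$ marking the start of the linear order. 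I route a Jordan curve $\gamma$ through the neighbours of $o'$ in this cyclic order, placing the arc of $\gamma$ between consecutive neighbours $v,v'$ inside the face of $G'$ lying between the edges $o'v$ and $o'v'$, and chosen so that $o'$ together with every edge of $G'$ incident to $o'$ lies strictly outside $\gamma$, while every vertex of $G'$ other than $o'$ and its neighbours (the future junctions) and every edge not incident to $o'$ (the future tracks) lies strictly inside. Deleting $o'$ and its incident edges and keeping the closed region bounded by $\gamma$ yields a topological disk, oriented to agree with the plane, with all tracks and junctions in its interior and all remaining vertices on its boundary in the prescribed order.

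Next I install the smooth structure at the junctions. At each vertex $j$ of $G'$ that is neither $o'$ nor a neighbour of $o'$ --- necessarily a junction, all of whose incident edges are tracks --- the incidences of $j$ recorded in $S'$ are its sharp angles, so the two incidences of $j$ not in $S'$ single out two angular positions that split the cyclic sequence of tracks at $j$ into two contiguous groups. Within a small disk around $j$ I apply an ambient isotopy of the incident tracks so that they all meet $j$ along one common tangent line, one group departing in each of the two directions along it, with the two group-transitions occupying exactly the two non-sharp angular positions. By the rule in the definition of planification --- two tracks at a junction have a smooth union precisely when the two non-sharp angles separate them --- the smooth vertex-to-vertex curves through the tracks and junctions of the drawing $D'$ thus obtained are exactly those dictated by the combinatorial data $(G',o',s',S')$.

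It then remains to verify that $D'$ is a strict outerconfluent drawing and that it is topologically equivalent to $D$. Validity is inheritance: the tracks of $D'$ are pairwise disjoint except at shared endpoints because $G'$ is a plane graph; every vertex lies on the disk boundary and every junction has at least three incident tracks by construction; and the remaining conditions --- each adjacency carried by a unique edge curve, no edge curve joining a vertex to itself or to a non-neighbour, every track used by some edge curve --- are properties of the combinatorial tuple, which by hypothesis and \cref{obs:top-equiv} carries the same content as the planification $(G,o,s,S)$ of $D$, and therefore hold for $D'$ because they hold for $D$. For the equivalence, $h$ already matches the vertices, tracks and junctions of $D'$ with those of $D$; the curves $h(\gamma)$ and $\partial(\text{disk of }D)$ are two Jordan curves passing through the vertices of $D$ in the same cyclic order, each bounding a region that contains all tracks of $D$ and excludes $o$, so they are isotopic relative to the drawing of $G$. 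Composing $h$ with an ambient isotopy fixing that drawing and carrying $h(\gamma)$ to $\partial(\text{disk of }D)$, followed by a local adjustment at each junction (legitimate because both drawings use the two-group common-tangent model with $S'\leftrightarrow S$-matched groups), turns $h$ into a smooth homeomorphism exhibiting $D'$ as topologically equivalent to $D$, preserving the clockwise boundary order and sending $s'$ to $s$.

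I expect the main obstacle to be the first step: the existence and essential uniqueness of the curve $\gamma$. One must show that each face of $G'$ incident to $o'$ can be cut so as to separate $o'$ and all of its incident edges from the remainder of that face while threading through the two prescribed neighbours of $o'$ --- even when $G'$ is not $2$-connected, for instance when $D$ has isolated vertices or several connected components, so that $o'$ is a cut vertex and its incident faces have complicated boundary walks --- and that the resulting disk, with its oriented boundary, is independent of the choices up to isotopy relative to $G'$. Once this ``the disk is canonically recoverable from the position of $o'$'' statement is secured, the rest is routine topological bookkeeping on top of the correspondence $(G',o',s',S')\cong(G,o,s,S)$ furnished by the hypothesis.
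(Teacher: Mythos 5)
Your construction follows the paper's proof almost step for step: identify the junctions as the non-neighbors of $o'$, modify the edge-curves within disjoint interior neighborhoods of each junction to realize the sharp/non-sharp structure dictated by $S'$, recover the disk from the faces of $G'$ incident with $o'$, and carry the homeomorphism of plane graphs over to a topological equivalence of confluent drawings. The technical restriction in the paper's definition (tracks and junctions interior to the disk) plays exactly the role you give it: it guarantees room for the local modifications.

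The one place you flag a worry --- existence and essential uniqueness of the Jordan curve $\gamma$ when $G'$ is not $2$-connected --- is not actually an obstacle, and you are doing more work than you need to by building $\gamma$ combinatorially from the faces around $o'$. You already have the homeomorphism $h$ of the plane taking $G'$ to $G$ (and hence $o'$ to $o$, $s'$ to $s$). Simply take $\gamma := h^{-1}\bigl(\partial(\text{disk of }D)\bigr)$. This is automatically a Jordan curve through the neighbors of $o'$ in the correct cyclic order, placing $o'$ and its incident edge-curves outside and all junctions and tracks inside, and $h(\gamma)$ is the disk boundary of $D$ on the nose --- no isotopy step and no uniqueness lemma required. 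The paper's proof implicitly takes this direct route when it says the resulting confluent drawing is ``mapped from drawing $D$ and its planification $G$ by a homeomorphism of the plane (the homeomorphism that maps $G$ to $G'$, composed with its local modifications at the junctions).'' With that simplification your argument closes cleanly, and the verification that $D'$ satisfies the axioms of a strict outerconfluent drawing (inherited from $D$ via the combinatorial correspondence and \cref{obs:top-equiv}) goes through exactly as you wrote it.
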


\begin{proof}
Define a junction of $G'$ to be any of its vertices that is not a neighbor of $o$; these are the vertices that correspond to junctions of $D$ under the topological equivalence of $G$ and $G'$. Find disjoint neighborhoods of each junction, interior to the disk of the drawing; these exist because of the restriction that $D$ cannot touch the boundary of the disk except at its vertices. Within each neighborhood, modify $G'$ (preserving topological equivalence as a plane drawing) so that the edge-curves meeting at each junction form sharp or smooth angles according to the information given in $S'$. (Because of the technical restriction on our drawings, each junction has a neighborhood interior to the disk of the drawing, within which this modification may be performed.) Consider the result as a confluent drawing, with $o$ and its incident edges removed, neighbors of $o$ as its vertices, and non-neighbors of $o$ as junctions, embedded in a disk whose boundary lies in the faces of $G'$ incident with $o$. Order the vertices of the drawing clockwise around this disk starting with $s$. Then the result is a confluent drawing, mapped from drawing $D$ and its planification $G$ by a homeomorphism of the plane (the homeomorphism that maps $G$ to $G'$, composed with its local modifications at the junctions). This homeomorphism takes each vertex, track, or junction of $D$ to a corresponding vertex, track, or junction of the resulting confluent drawing, and preserves the smoothness or lack thereof of unions of tracks. Therefore, it is a topological equivalence of confluent drawings.
\end{proof}

Thus, we can count topological equivalence classes of strict outerconfluent drawings by using Tur\'an's lemma to count their planifications.
However, to apply Tur\'an's lemma, we need to know how many junctions and tracks there can be. Fortunately, this has already been bounded:

\begin{lemma}[\citet{EppHolLof-JoCG-16}]
\label{lem:few-junctions}
Every strict outerconfluent graph with $n$ vertices has a strict outerconfluent drawing with at most $n-3$ junctions and at most $3n-6$ tracks.
\end{lemma}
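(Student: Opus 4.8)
Since this statement is quoted from Eppstein et al.~\cite{EppHolLof-JoCG-16}, one could simply invoke it, but here is the approach I would take to prove it directly. The plan is to start from any strict outerconfluent drawing of $G$ and pass to one that is \emph{minimal}: it uses the fewest tracks, and among all drawings with that many tracks, the fewest junctions. Minimality forces several normalizations. No junction has only two incident tracks, since such a junction forms a locally smooth arc and could be suppressed by replacing its two tracks with their union. Every track lies on some edge curve (this is required anyway, and an unused track could simply be deleted). And the \emph{track graph} $H$ --- whose vertices are the vertices and junctions of the drawing, whose edges are the tracks, and which is embedded in the disk with the vertices of $G$ on its boundary --- is a \emph{simple} plane graph, since, as already observed in the discussion of planifications, two tracks with the same pair of endpoints would force a loop or an unused track.

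The core of the proof is to show that in a minimal drawing the subgraph $H_J$ of $H$ consisting of all tracks incident to at least one junction is a forest, equivalently that the union of those tracks, as a subset of the disk, is simply connected. Suppose instead that $H_J$ contains a cycle $C$, chosen so that the region $\Delta$ it bounds contains no vertex and no junction in its interior. Any edge curve that uses a track of $\partial\Delta$ enters and leaves $\partial\Delta$ through tracks that do not lie on it, so all such curves can be rerouted through the (empty) interior of $\Delta$; after this rerouting at least one track of $C$ is used by no edge curve and can be deleted, contradicting minimality. The step I expect to be the main obstacle is verifying that the rerouting preserves strictness --- that it neither creates an edge curve between a non-adjacent pair of vertices nor a second edge curve between an already-adjacent pair. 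This needs a careful case analysis of the smooth/sharp angle pattern at the junctions on $C$, using the fact that the two non-sharp angles at a junction partition its incident tracks into two arcs whose straight-through connections run only between the arcs and never within one arc.

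Once $H_J$ is known to be a forest, the two bounds become a counting exercise. Each junction has degree at least three in $H_J$ (all of its incident tracks qualify), and every leaf of $H_J$ is a vertex of $G$; the standard inequality for forests whose branch vertices all have degree at least three then gives $|J| \le n-2$, which is tightened to $|J| \le n-3$ with a little additional care about the disk boundary (or by treating $n \le 4$ separately, where the claim is immediate). For the tracks: the junction-incident tracks are the edges of the forest $H_J$, the remaining tracks join pairs of vertices of $G$ and form an outerplanar graph on those $n$ vertices, and planarity of $H$ links the two contributions; balancing these against $|J| \le n-3$ yields the bound of $3n-6$, with the extremal configuration resembling a triangulated $n$-gon carrying one junction per interior diagonal. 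I expect everything after the acyclicity claim to be routine bookkeeping, so the rerouting argument in the middle paragraph is where the real work lies.
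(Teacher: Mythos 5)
This lemma is imported verbatim from Eppstein et al.\ \cite{EppHolLof-JoCG-16}; the present paper gives no proof of it, so there is nothing in the source to compare your attempt against. Judged on its own terms, your sketch has the right flavor --- pass to a drawing that is minimal in tracks then junctions, argue that the junction-incident tracks form a forest, and count --- but two of the steps you describe as routine are not, and one you describe as the main obstacle really is one.

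First, the rerouting argument for acyclicity is doing essentially all the work and is not yet a proof. Choosing an innermost cycle $C$ in $H_J$ with empty interior is fine, but ``all such curves can be rerouted through the (empty) interior of $\Delta$'' is vague: a smooth edge curve that enters $\partial\Delta$ along some track of $C$ must leave $\partial\Delta$ along a track at which it makes a smooth angle, and there is no a priori reason the rerouted curve through the interior of $\Delta$ can be made smooth at both entry and exit junctions, nor that the rerouting avoids creating a second smooth edge curve between a pair of vertices that is already joined (breaking strictness). You flag this yourself, which is honest, but the claim that it reduces to ``a careful case analysis of the smooth/sharp angle pattern'' is optimism rather than an argument.

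Second, and more concretely, the counting you describe does not obviously produce the stated constants. From ``$H_J$ is a forest, every junction has degree at least three, every leaf is a boundary vertex'' one gets $|J|\le n-2$ by the usual leaf-count inequality, and your appeal to ``a little additional care about the disk boundary'' to improve this to $n-3$ is a gap: a degree-three tree with all $n$ boundary vertices as leaves is combinatorially and topologically possible and has exactly $n-2$ internal nodes, so something specific to minimal strict outerconfluent drawings must exclude it. For tracks the gap is larger. A forest $H_J$ has at most $n+|J|-1$ edges, and a planar graph drawn in the disk with $n$ boundary vertices and $|J|$ interior vertices has at most $2n+3|J|-3$ edges in total; plugging in $|J|\le n-3$ gives a track bound of $5n-12$, not $3n-6$. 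So ``balancing these against $|J|\le n-3$'' does not yield $3n-6$ without some further structural constraint that you have not identified --- the stated bound is strictly stronger than what the forest-plus-outerplanar picture alone implies. You would need to extract more from minimality (e.g.\ restrictions on which faces of $H$ can occur, or on how the vertex-to-vertex tracks may interleave with the forest) before the arithmetic closes, and it is not clear from your sketch what that extra ingredient is.

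Since the paper itself treats this as a black box, citing it is the right move; if you do want to reprove it, the forest idea is a reasonable starting point but both the strictness-preserving rerouting and the final counts need to be worked out in full before the argument is believable.
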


Putting these results together we have the main result of this section:

\begin{lemma}
\label{lem:small}
The ordered strict outerconfluent graphs form a small class of ordered graphs.
\end{lemma}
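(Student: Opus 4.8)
The plan is to put together the three facts just assembled: \cref{lem:few-junctions}, which bounds the number of junctions and tracks in a suitable strict outerconfluent drawing; \cref{lem:planification} together with \cref{obs:top-equiv}, which together say that the isomorphism class of the ordered graph of a drawing is determined by the topological equivalence class of that drawing's planification; and \cref{cor:plane-small}, which bounds singly exponentially the number of plane graphs of a given size. The slogan is that a planification of an $n$-vertex drawing is a plane graph on $O(n)$ vertices, decorated with two marked vertices and a subset of its $O(n)$ face--vertex incidences, and that there are only $2^{O(n)}$ such decorated plane graphs.

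First I would fix $n$ and let $G$ be an $n$-vertex ordered strict outerconfluent graph; by definition it is the ordered graph of some ordered strict outerconfluent drawing. I would then pass to a more economical drawing: applying \cref{lem:few-junctions} to the underlying unordered graph, and using that the drawing it produces can be obtained from an arbitrary drawing by local simplifications (removing superfluous junctions and tracks) that never move the vertices on the boundary of the disk --- so that the cyclic order of the vertices, and hence the ordered graph itself, is preserved --- I may assume that $G$ is the ordered graph of an ordered strict outerconfluent drawing $D$ with at most $n-3$ junctions and at most $3n-6$ tracks. (The technical requirement that tracks and junctions lie interior to the disk costs nothing, as noted in the relevant definition.) Let $(H,o,s,S)$ be a planification of $D$. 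Then $H$ has $n+(\#\text{junctions})+1\le 2n-2$ vertices and $(\#\text{tracks})+n\le 4n-6$ edges.

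Next comes the count. By \cref{obs:top-equiv} and \cref{lem:planification}, the isomorphism class of $G$ is a function of the topological equivalence class of the tuple $(H,o,s,S)$, so it suffices to bound the number of these classes with $|V(H)|\le 2n-2$ and $|E(H)|\le 4n-6$. By \cref{cor:plane-small}, summed over the at most $2n$ relevant vertex counts, there are at most $2n\cdot c^{2n-2}=2^{O(n)}$ possibilities for the topological equivalence class of $H$. For each of these, the ordered pair $(o,s)$ has at most $(2n-2)^2$ possibilities. Finally, $S$ records, at each junction $j$ of degree $d_j$ in $H$, which two of the $d_j$ angles there are smooth, so the number of possibilities for $S$ is $\prod_j \binom{d_j}{2}\le \prod_j 2^{d_j}=2^{\sum_j d_j}\le 2^{2|E(H)|}\le 2^{8n-12}$. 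Multiplying the three factors, the number of isomorphism classes of $n$-vertex ordered strict outerconfluent graphs is $2^{O(n)}=O((c')^n)$ for a suitable constant $c'$, which is precisely the definition of a small class.

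The main obstacle I anticipate is the order-preservation point in the second paragraph: \cref{lem:few-junctions} is phrased for unordered strict outerconfluent graphs, and to use it here one must verify that the few-junctions drawing can be taken to realize the prescribed vertex ordering --- i.e. that the simplification underlying that lemma does not permute the boundary vertices. The only other place demanding care is quantitative: it is essential that $S$ ranges over only $2^{O(n)}$ possibilities rather than $2^{O(n^2)}$, which is exactly what the bound $\prod_j\binom{d_j}{2}\le 2^{\sum_j d_j}=2^{O(|E(H)|)}$ supplies, using that the total degree of the junctions is linear, not quadratic, in the size of $H$.
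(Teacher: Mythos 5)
Your proof is correct and follows essentially the same route as the paper's: count topological equivalence classes of planifications of drawings with linearly many junctions and tracks, combining Tur\'an's bound on plane graphs with a linear number of decoration bits (the choices of $o$, $s$, and $S$) to get a singly exponential count. The concern you flag about \cref{lem:few-junctions} being stated for unordered graphs is one the paper also passes over silently; it is resolved because the simplification of Eppstein et al.\ operates in place on a given drawing, so it preserves the cyclic vertex order and hence the ordered graph.
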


\begin{proof}
By \cref{lem:planification}, the number of equivalence classes of $n$-vertex ordered strict outerconfluent graphs under isomorphism of ordered graphs is at most the number of equivalence classes of planifications of drawings of these graphs, under topological equivalence of planifications. Given an $n$-vertex ordered strict outerconfluent graph, let $D$ be a strict outerconfluent drawing of it with at most $n-3$ junctions and at most $3n-6$ tracks, known to exist by \cref{lem:few-junctions}, and let $(G,o,s,S)$ be its planification. Each junction has as many vertex--face incidences as it has track--junction incidences; each track contributes two such incidences, one at each endpoint, but in the worst case each vertex of the given graph takes up at least one track endpoint (if it were an isolated vertex the number of junctions and tracks would be smaller) so the number of vertex--face incidences at junctions is at most $5n-12$. The number of vertices in $G$ is one plus the number of vertices and junctions in $D$, at most $2n-2$, so by \cref{cor:plane-small} the number of choices for the plane graph $G$ (under topological equivalence) is singly exponential in $n$. The number of choices for $o$ and $s$ is at most $2n-2$. The number of choices for $S$ is at most $2^{5n-12}$. Multiplying these numbers of choices together gives a singly exponential number of planifications, under topological equivalence, and therefore a singly-exponential number of ordered strict outerconfluent graphs, under ordered isomorphism.
\end{proof}

\subsection{Twin-width}

\begin{theorem}
The strict outerconfluent graphs have bounded twin-width. If the ordering of the vertices along the boundary of a strict outerconfluent drawing of one of these graphs is given, a twin-width decomposition for it of bounded width can be constructed in polynomial time.
\end{theorem}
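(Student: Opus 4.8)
The plan is simply to assemble the pieces already developed in this section. By \cref{cor:hereditary} the ordered strict outerconfluent graphs form a hereditary class of ordered graphs, and by \cref{lem:small} this class is small. Hence the hypothesis of the second sentence of \cref{lem:small-width} is met: the strict outerconfluent graphs, viewed as unordered graphs, can be ordered -- using the clockwise boundary ordering of any ordered strict outerconfluent drawing, with an arbitrary choice of starting vertex -- so as to form a small hereditary class of ordered graphs. The ``if'' direction of \cref{lem:small-width} then immediately yields that the strict outerconfluent graphs have bounded twin-width.

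For the algorithmic statement, suppose we are given a strict outerconfluent graph $G$ together with the cyclic ordering of its vertices around the boundary of some strict outerconfluent drawing. Picking any vertex as the start turns this cyclic ordering into a linear ordering $<$, and $(G,<)$ is by definition a strict outerconfluent ordered graph, hence a member of our small hereditary class. Applying the polynomial-time algorithm of \cref{lem:small-alg} to $(G,<)$ produces, in polynomial time, a contraction sequence for $(G,<)$ whose red graphs all have maximum degree at most some constant $d$ depending only on the class. This contraction sequence is at the same time a twin-width decomposition of the unordered graph $G$: a decomposition in the sense of \cref{def:twin-width} is just a sequence of pairwise merges of clusters, and whether two clusters become joined by a red edge after a merge depends only on the adjacencies of $G$, not on the ordering $<$. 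So the sequence of red graphs is the same whether the sequence acts on $(G,<)$ or on $G$, and it certifies twin-width at most $d$ for $G$ while being computed in polynomial time.

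I do not expect a real obstacle here beyond \cref{lem:small}, which carries the weight of the argument; the remaining points are formalities. The only genuinely essential use of the input ordering is in forming $(G,<)$: without a supplied vertex ordering we would first need to construct a strict outerconfluent drawing (or a suitable ordering), and the complexity of that recognition problem is open, whereas with the ordering in hand everything else is routine. The one subtlety worth stating explicitly is that the cyclic-versus-linear distinction in the boundary ordering is immaterial, since every rotation of the boundary order gives an ordered graph lying in the same hereditary class, so any choice of starting vertex is legitimate.
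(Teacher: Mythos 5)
Your proof is correct and follows essentially the same route as the paper: invoke \cref{cor:hereditary} and \cref{lem:small} to obtain a small hereditary class of ordered graphs, then apply \cref{lem:small-width} for boundedness and \cref{lem:small-alg} for the polynomial-time decomposition. The extra remarks you add (that a contraction sequence for $(G,<)$ is also one for $G$, and that the cyclic-to-linear choice is immaterial) are correct points the paper leaves implicit, but they do not change the argument.
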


\begin{proof}
This follows from \cref{cor:hereditary} and \cref{lem:small}, under which assigning them their boundary orderings produces a hereditary small class of ordered graphs,
\cref{lem:small-width}, under which  hereditary small class of ordered graphs have bounded twin-width, and 
\cref{lem:small-alg}, under which twin-width decompositions of bounded width for hereditary small class of ordered graphs can be found in polynomial time.
\end{proof}

\section{Discussion}

We have shown that the clique-width of strict outerconfluent graphs is unbounded, but our lower bound proves only sublogarithmic clique-width. It would be of interest to determine how quickly the clique-width can grow, as a function of the number of vertices. Many important graph optimization problems, including all problems expressible in the MSO$_1$ form of monadic second-order logic, can be solved efficiently for the graphs of bounded clique-width \citep{CouMakRot-TCS-00}, and our result presents an obstacle to the application of these techniques on strict outerconfluent graphs. If we could prove a superlogarithmic lower bound, it would cause algorithms whose dependence on clique-width is exponential to have a superpolynomial overall time bound, creating a greater barrier to their use.

In the other direction, we have shown that the twin-width of these graphs is bounded. This enables the application to these graphs of fixed-parameter-tractable algorithms for graphs of bounded treewidth, including algorithms for first-order model checking \citep{TW1} and for finding a graph coloring with a number of colors within a constant factor of the maximum clique size \citep{TW3}. However, because our proof goes through a counting argument, it does not provide a direct construction of a low-twin-width decomposition, and the bound that it provides on twin-width is large. It would be of interest to find an alternate proof with a better bound on twin-width.

It is natural to try to extend our twin-width bound to more general classes of confluent graphs.
The full class of all confluent graphs is out of reach, because it includes the interval graphs \citep{DicEppGoo-JGAA-05}, and these do not form a small class: even when counting the $n$-vertex interval graphs as unordered, undirected graphs their number is exponential in $n\log n$ \citep{YanPip-PAMSB-17}. We remark that this bound, together with our methods for converting counting problems on confluent drawings to plane graphs, can be used to show that some confluent drawings require $\Omega(n\log n)$ tracks; we omit the details. The strict confluent drawings produce a small class of unordered graphs by the same reasoning as \cref{lem:small}, but we do not know of a natural ordering for these graphs under which they are hereditary. On the other hand, the (non-strict) outerconfluent drawings naturally form a hereditary class of ordered graphs, by the same reasoning as \cref{cor:hereditary}, but we do not know whether they are small.

Many of the other known width parameters are either unbounded when clique-width is unbounded, or bounded when twin-width is bounded. Thus, our results settle whether these parameters are bounded on the strict outerconfluent graphs. However, it may be of interest to consider other width parameters in connection with other forms of confluent drawing.

\bibliographystyle{abbrvnat}
\bibliography{confluent-width}

\end{document}